\newcommand\cF{{\mathcal F}}
\newcommand\cN{{\mathcal N}}
\newcommand\cT{{\mathcal T}}
\newcommand\ex{\mathrm{ex}}
\newcommand\exa{\mathrm{exa}}
\theoremstyle{plain}
\newtheorem{theorem}{Theorem}[section]
\newtheorem{proposition}[theorem]{Proposition}
\theoremstyle{definition}
\newtheorem{defn}[theorem]{Definition}
\newcommand\cref[1]{Corollary~\ref{cor:#1}}
\def\marrow{{\boldmath {\marginpar[\hfill$\rightarrow \rightarrow$]{$\leftarrow \leftarrow$}}}}
\newcommand*\patchAmsMathEnvironmentForLineno[1]{%
  \expandafter\let\csname old#1\expandafter\endcsname\csname #1\endcsname
  \expandafter\let\csname oldend#1\expandafter\endcsname\csname end#1\endcsname
  \renewenvironment{#1}%
     {\linenomath\csname old#1\endcsname}%
     {\csname oldend#1\endcsname\endlinenomath}}%
\newcommand*\patchBothAmsMathEnvironmentsForLineno[1]{%
  \patchAmsMathEnvironmentForLineno{#1}%
  \patchAmsMathEnvironmentForLineno{#1*}}%
\def\marrow{{\boldmath {\marginpar[\hfill$\rightarrow \rightarrow$]{$\leftarrow \leftarrow$}}}}
\def\gd#1{{\sc GDANI: }{\marrow\sf #1}}
\def\kb#1{{\sc KB: }{\marrow\sf #1}}
\title{On graphs that contain exactly $k$ copies of a subgraph, and a related problem in search theory}
\author{D\'aniel Gerbner$^{a}$,
Bal\'azs Keszegh$^{a,b}$, 
D\'aniel Lenger$^{a}$,
D\'aniel T. Nagy$^{a}$
\\
D\"om\"ot\"or P\'alv\"olgyi$^{b}$, 
Bal\'azs Patk\'os$^{a}$,
M\'at\'e Vizer$^{a,c}$,
G\'abor Wiener$^{c}$
\\
\small $^a$ Alfr\'ed R\'enyi Institute of Mathematics\\
\small $^b$ ELTE Eötvös Lor\'and University\\
\small $^c$ Budapest University of Technology and Economics, \\
\small Department of Computer Science and Information Theory \\
}
\date{}
\begin{document}

\maketitle

\begin{abstract}
    We study $\mathrm{exa}_k(n,F)$, the largest number of edges in an $n$-vertex graph $G$ that contains exactly $k$ copies of a given subgraph $F$. The case $k=0$ is the Tur\'an number $\ex(n,F)$ that is among the most studied parameters in extremal graph theory. We show that for any $F$ and $k$, $\mathrm{exa}_k(n,F)=(1+o(1))\ex(n,F))$ and determine the exact values of $\mathrm{exa}_k(n,K_3)$ and $\mathrm{exa}_1(n,K_r)$ for $n$ large enough.
    We also explore a connection to the following well-known problem in search theory. We are given a graph of order $n$ that consists of an unknown copy of $F$ and some isolated vertices. We can ask pairs of vertices as queries, and the answer tells us whether there is an edge between those vertices. Our goal is to describe the graph using as few queries as possible.
    Aigner and Triesch in 1990 showed that the number of queries needed is at least $\binom{n}{2}-\mathrm{exa}_1(n,F)$. Among other results we show that the number of queries that were answered NO is at least $\binom{n}{2}-\mathrm{exa}_1(n,F)$.
\end{abstract}

\section{Introduction}


In this paper, we study a problem that connects two seemingly distant areas of graph theory. In extremal graph theory, the following questions are among the most fundamental ones. Given a property of graphs, how many edges can a graph $G$ of order $n$ satisfying the property have? The most basic question is due to Tur\'an \cite{T} dealing with the property that $G$ does not contain a given graph $F$ as a subgraph. Here we study the variant where $G$ contains exactly $k$ copies of $F$.

In combinatorial search theory, there is an input unknown to us, and we want to determine either the input itself  or some property of it by asking so-called queries. Our goal is to use as few queries as possible. While the basic question does not assume any structure on the input (see \cite{dh} for details on general search problems), some others do. A natural idea is to assume that the input is a graph, in which case the queries may correspond to edges or subgraphs.

Below we give a more detailed introduction with precise definitions both to the extremal problem and the search problem we study, as well as their connection.

\subsection{Extremal graphs with a given number of copies of some graph}

Tur\'an theory deals with $\ex(n,F)$, the largest number of edges in simple graphs of order $n$  that contain zero copies of a fixed subgraph $F$. Tur\'an \cite{T} proved that $\ex(n,K_{r+1})=|E(T(n,r))|$, where $T(n,r)$ is the
complete $r$-partite graph of order $n$ with each part of order $\lfloor n/r\rfloor$ or $\lceil n/r\rceil$. The famous Erd\H os-Stone-Simonovits theorem \cite{ersi,es} states that if $F$ has chromatic number $r$, then $\ex(n,F)=|E(T(n,r))|+o(n^2)$.

Here we study a generalization, where zero is replaced by other numbers. Let $\cN(H,F)$ denote the number of subgraphs of $H$ isomorphic to $F$.
We start with a very general definition.

\begin{defn}
Let $A$ be a set of non-negative integers, $\cF$  a family of graphs and $n$ a positive integer. Then $\exa_A(n,\cF)$ denotes the largest possible number of edges in a graph $H$ on $n$ vertices, such that $\sum_{F\in\cF} \cN(H,F)\in A$, i.e., the total number of copies of members of $\cF$ contained in $H$ belongs to $A$ (if exists). 
In the case $\cF=\{F\}$, we simply write $\exa_A(n,F)$ instead of $\exa_A(n,\{F\})$ and if $A$ contains only the element $a$, we write $\exa_a(n,\cF)$ instead of $\exa_{\{a\}}(n,\cF)$.
\end{defn}


In this generality, this covers a lot of different topics. First, $\exa_0(n,\cF)=\ex(n,\cF)$. More generally, if $A=\{0,1,\dots,m\}$, then $\exa_A(n,\cF)+1$ is the smallest number of edges that forces more than $m$ copies of $\cF$. The so-called supersaturation phenomenon, that describes at least how many copies of $F$ we can find in a graph of order $n$ with a given number of edges, is among the most studied extensions of Tur\'an theory, see e.g. \cite{ES,rei}.  For example, the earliest result of this type, due to Rademacher (1941, unpublished) states that a graph with $\lfloor n^2/4\rfloor+1$ edges contains at least $\lfloor n/2\rfloor$ triangles. In our language, it says that for $A=\{0,1,\dots,\lfloor n/2\rfloor-1\}$, we have $\exa_A(n,K_3)\le \lfloor n^2/4\rfloor$.
Erd\H os \cite{erdos} asked whether $\exa_0(n,G)=\exa_{\{0,1\}}(n,G)$ holds for all graphs. As pointed out in \cite{kinyai},  this is not true in general, but still might hold for every $n$ and $G$ if $n\ge n_0(G)$.


In this paper, we examine only fixed and finite sets $A$, while $n$ goes to infinity. It is obvious that $\exa_A(n,\cF)=\max \{\exa_{a}(n,\cF): a\in A\}$, thus we can focus on the case when $A$ consists of a single integer $k$.

Let us consider first fixed and finite systems $\cF$. 
In this case $\exa_k(n,\cF)$ and $ex(n,\cF)$ are asymptotically equal.

\begin{proposition}\label{asym}
$\exa_k(n,\cF)=(1+o(1))\ex(n,\cF)$.
\end{proposition}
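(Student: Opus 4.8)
The plan is to establish the two one-sided estimates $\exa_k(n,\cF)\le\ex(n,\cF)+O(1)$ and $\exa_k(n,\cF)\ge(1-o(1))\ex(n,\cF)$; as soon as $\ex(n,\cF)$ is unbounded (the statement being degenerate otherwise) these together give $\exa_k(n,\cF)=(1+o(1))\ex(n,\cF)$.

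For the upper bound, let $H$ be an $n$-vertex graph with $\sum_{F\in\cF}\cN(H,F)=k$, and let $E_0\subseteq E(H)$ be the union of the edge sets of these $k$ copies, so that $|E_0|\le k\cdot\max_{F\in\cF}|E(F)|$. Then $H-E_0$ is $\cF$-free, because any subgraph of $H$ isomorphic to a member of $\cF$ is one of the $k$ counted copies and hence has all of its edges in $E_0$. Therefore $e(H)=e(H-E_0)+|E_0|\le\ex(n,\cF)+k\max_{F\in\cF}|E(F)|$, which is the desired bound.

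For the lower bound I first record that removing a bounded number of vertices is asymptotically free: $\ex(n-c,\cF)=(1-o(1))\ex(n,\cF)$ for every fixed $c$. (If $H$ is extremal $\cF$-free on $n$ vertices, averaging $e(H-v)$ over $v$ and using that each $H-v$ is still $\cF$-free gives $\ex(n-1,\cF)\ge\frac{n-2}{n}\ex(n,\cF)$, and one iterates $c$ times; alternatively, delete the $c$ lowest-degree vertices of $H$, whose total degree is at most $c$ times the average degree $2\ex(n,\cF)/n=o(\ex(n,\cF))$.) Now fix a member $F^*\in\cF$ that is minimal under the subgraph relation, so that $F^*$ itself is the only subgraph of $F^*$ isomorphic to a member of $\cF$, and it occurs exactly once. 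Let $G_0$ be an extremal $\cF$-free graph on $n-k|V(F^*)|$ vertices and set $H:=G_0\sqcup(k\cdot F^*)$, the disjoint union of $G_0$ with $k$ vertex-disjoint copies of $F^*$. Every connected subgraph of $H$ isomorphic to a member of $\cF$ lies inside a single component: $G_0$ contains none, and each copy of $F^*$ contains exactly one (namely $F^*$). Hence $\sum_{F\in\cF}\cN(H,F)=k$, while $e(H)=e(G_0)+k|E(F^*)|\ge\ex(n-k|V(F^*)|,\cF)=(1-o(1))\ex(n,\cF)$. Combined with the upper bound, this proves the proposition.

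The step I expect to be the main obstacle is the lower-bound construction: one must ensure the gadget contributes exactly $k$ copies and, above all, that no ``mixed'' copy of a member of $\cF$ straddles $G_0$ and the gadget. The disjoint-union argument above exploits that the members of $\cF$ are connected; for $\cF$ containing disconnected graphs one must replace the gadget by something that in addition controls how a disconnected member could be assembled from a piece inside $G_0$ and a piece inside the gadget, while still retaining $(1-o(1))\ex(n,\cF)$ edges. The remaining ingredients — the averaging/low-degree deletion estimate and the edge-deletion argument for the upper bound — are routine.
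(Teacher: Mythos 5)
Your proof is correct and follows essentially the same route as the paper's stronger version (Proposition \ref{ok}): the upper bound by deleting the edges of the $k$ copies (the paper deletes just one edge per copy, giving the sharper bound $\ex(n,\cF)+k$), and the lower bound by removing the lowest-degree vertices from an extremal $\cF$-free graph and appending $k$ vertex-disjoint copies of an inclusion-minimal member of $\cF$. The connectivity caveat you flag at the end is genuine and is exactly how the paper organizes the argument: Proposition \ref{ok} explicitly assumes all members of $\cF$ are connected, and disconnected $F$ is treated in a separate proposition where the asymptotics is in fact governed by $\ex(n,\{F_1,\dots,F_p\})$ rather than $\ex(n,F)$.
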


For graphs with chromatic number at least three, the above result was proved by Aigner and Triesch \cite{aitr} in the case $k=1$.
We provide a proof of this proposition and other related results in Section 2.

\bigskip


Let us turn our attention to the case when $\cF$ contains a graph of an order increasing in $n$. For this setting, we are only aware of results concerning $\exa_1(n,\cF)$, and tipically  all graphs in $\cF$ have order $n$. 

For even values of $n$  let $M_n$ denote the matching on $n$ vertices. Hetyei (Corollary~1.6 in \cite{lov}) showed $\exa_1(n,M_n)=n^2/4$.
Sheehan \cite{shee} proved $\exa_1(n,C_n)=n^2/4+1$ (note that \textit{uniquely Hamiltonian graphs} have been studied by several researchers).
Hendry \cite{hend}, Johann \cite{joh} and Volkmann \cite{vol} studied $\exa_1(n,\cF)$ in the case when $\cF$ consists of all $k$-factors (for some fixed $k$).
Hoffmann and Volkmann \cite{hofvol} considered the same problem for $[1,k]$-factors (i.e. spanning subgraphs with all degrees between 1 and $k$). 
Hoffmann, Sidorowicz, and Volkmann \cite{hofsidvol} studied the largest number of edges in bipartite graphs containing a unique $k$-factor.




\subsection{Search theory problems}

Several other results were obtained by Aigner and Triesch \cite{aitr} in connection with a problem in search theory. Suppose that a family $\cF$ of graphs is fixed. We are given a set $V$ of $n$ vertices, and an unknown graph $F_0$ on this vertex set, which is isomorphic to some $F$ from the family $\cF$ (apart from possible isolated vertices in $F_0$). We can ask a pair of vertices from $V$ as a query, and the answer is YES if there is an edge between those two vertices in $F_0$, and NO otherwise. The goal is to identify $F_0$ using as few queries as possible. Note that it is not enough to determine $F$ itself, e.g., if $\cF=\{C_3,C_4\}$, it is not enough to determine that $F_0$ is isomorphic to, say $C_4$ plus some isolated vertices, but we also need to tell which edges belong to this $C_4$. Let us denote by $L(n,\cF)$ the complexity of this problem, i.e. the number of queries needed to do this in the worst case, for the best algorithm. In the case $\cF=\{F\}$, we write $L(n,F)$ instead of $L(n,\{F\})$.

One can also look at this problem 
as a game between a Questioner and an Adversary. The Questioner asks the queries, and the Adversary answers them arbitrarily, but has to make sure that the YES answers form a graph $F_0$ isomorphic to some $F\in\cF$ at the end (apart from possible isolated vertices in $F_0$). 
The goal of the Questioner is to identify $F_0$ with as few queries as possible, while the Adversary's goal is the opposite, to delay the identification as much as possible. Then if both players play optimally, the game ends after exactly $L(n,\cF)$ queries. 

It is worth mentioning that a related problem has attracted more attention (see e.g. \cite{aa}). Here each query corresponds to a set $D$ of vertices and the answer is YES if and only if $S$ induces an edge of $F_0$. As the queries are more general, the lower bounds for the number of queries in this problem are lower bounds for $L(n,\cF)$, as well. E.g. it was shown in \cite{aa} that $\binom{n}{2}$ queries are needed if $\cF$ is the family of all stars, thus we also have $L(n,\cF)=\binom{n}{2}$.

\medskip 

Aigner and Triesch \cite{aitr} found the following connection between parameters $L(n,\cF)$ and $\exa_1(n,\cF)$. 

\begin{theorem}\label{aigtri}
$L(n,\cF)\ge \binom{n}{2}-\exa_1(n,\cF)$.
\end{theorem}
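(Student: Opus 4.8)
The plan is to prove the (somewhat stronger) statement that against one specific adversary strategy every Questioner is forced to receive at least $\binom n2-\exa_1(n,\cF)$ NO answers, which in particular forces that many queries and hence gives the bound on $L(n,\cF)$. I will use the \emph{lazy} adversary: it keeps track of the set $\cC$ of all graphs on the vertex set that are isomorphic to a member of $\cF$ plus isolated vertices and are consistent with the answers given so far (i.e.\ contain every pair answered YES and avoid every pair answered NO); when a pair $e$ is queried it answers NO if some graph in $\cC$ omits $e$, and answers YES otherwise. This is legal because $\cC$ never becomes empty, and the Questioner cannot stop before $\cC$ has been reduced to a single graph $F_0$, since otherwise it has not identified the input.

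Write $\mathrm{No}$ for the set of pairs that were answered NO and let $R$ be the graph on the vertex set whose edges are exactly the pairs \emph{not} answered NO, so that $e(R)=\binom n2-|\mathrm{No}|$. Since $F_0$ is consistent it avoids $\mathrm{No}$, hence $F_0\subseteq R$ and $R$ contains at least one copy of a member of $\cF$. The key step is that $R$ contains \emph{no other} copy. Suppose $F_1\subseteq R$ is a copy of a member of $\cF$ with edge set different from that of $F_0$. If every YES pair is an edge of $F_1$, then $F_1$ is itself a consistent graph distinct from $F_0$, contradicting $|\cC|=1$ at the end. Otherwise, let $e_1$ be the first pair, in the order in which the queries were asked, that received the answer YES but is not an edge of $F_1$. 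At the moment $e_1$ was queried, every earlier YES pair was an edge of $F_1$ by the choice of $e_1$, and $F_1$ avoided every pair answered NO up to that point (those form a subset of $\mathrm{No}$, which $F_1$ avoids); therefore $F_1$ belonged to $\cC$ at that moment. But the lazy adversary answered YES to $e_1$, which it does only when every graph currently in $\cC$ contains $e_1$ --- contradicting $e_1\notin E(F_1)$. Hence $R$ has exactly one copy of a member of $\cF$.

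By the definition of $\exa_1$ this gives $e(R)\le\exa_1(n,\cF)$, that is $|\mathrm{No}|\ge\binom n2-\exa_1(n,\cF)$, and since the number of queries is at least $|\mathrm{No}|$ we obtain $L(n,\cF)\ge\binom n2-\exa_1(n,\cF)$. The only genuinely delicate point is ruling out the extra copies $F_1$ inside $R$: such a copy need not be consistent with the YES answers, and the ``first offending YES'' argument is exactly what converts that inconsistency into a contradiction with laziness. Everything else --- legality of the adversary, the edge count of $R$, and (if one insists on full rigour) the trivial degenerate cases where $n$ is too small for $\cF$ to have a copy at all --- is routine.
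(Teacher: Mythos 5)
Your proof is correct and takes essentially the same approach as the paper's: the paper also plays the ``answer NO whenever possible'' adversary and shows that every copy of a member of $\cF$ contained in the graph of non-NO pairs must contain all YES pairs, so that graph has exactly one copy and hence at most $\exa_1(n,\cF)$ edges; your ``first offending YES'' step is just a careful write-up of the paper's ``the Adversary could have answered NO to that edge.'' Like you, the paper actually derives the stronger lower bound on the number of NO answers (Proposition \ref{vesszo}).
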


The basic idea behind the proof is the following. The Adversary answers NO, whenever possible. Note that at the end of the game the pairs that were not asked, together with the pairs that were answered YES, form a graph that contains exactly one copy of a graph from $\cF$. We will prove a strengthening of this result in Section \ref{search} showing that the inequality is quite close to being an equality. 



Aigner and Triesch \cite{aitr} also determined $\exa_1(n,\cF)$ and $L(n,\cF)$ for the following families: the star o order $n$; the family of all trees of order $n$; the complete graph $K_l$ for some $l\le n$; the matching $M_l$ for some $l\le n$. They also gave upper bounds on $L(n,\cF)$ for Hamiltonian paths and cycles (i.e., when $\cF=\{P_n\}$ and when $\cF=\{C_n\}$). 


\subsection{Structure of the paper}



In Section 2 we study $\exa_k(n,F)$ when $F$ is a fixed graph. We prove generalizations of Proposition \ref{asym} and the following exact results.

\begin{theorem}\label{klikk} We have
$\exa_1(n,K_r)=\binom{r}{2}+(r-2)(n-r)+\ex(n-r,K_r)$.
\end{theorem}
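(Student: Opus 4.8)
The plan is to prove matching upper and lower bounds, the upper bound being a short edge-partition argument and the lower bound an explicit construction.

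For the \textbf{upper bound}, let $G$ be an $n$-vertex graph with exactly one copy of $K_r$, and let $S$ be the $r$-set spanning it. Split $E(G)$ into edges inside $S$, edges between $S$ and $V(G)\setminus S$, and edges inside $V(G)\setminus S$. The first class has at most $\binom r2$ edges. The subgraph induced on $V(G)\setminus S$ is $K_r$-free (a $K_r$ there would be a second copy), so the third class has at most $\ex(n-r,K_r)$ edges. Finally, no vertex $u\notin S$ has $r-1$ neighbours in $S$: those neighbours span a clique inside the $K_r$ on $S$, so together with $u$ they would form a second $K_r$. Hence each $u\notin S$ has at most $r-2$ neighbours in $S$, bounding the middle class by $(r-2)(n-r)$. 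Summing gives $\exa_1(n,K_r)\le\binom r2+(r-2)(n-r)+\ex(n-r,K_r)$.

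For the \textbf{lower bound} we build a graph meeting this count. Let $S=\{s_1,\dots,s_r\}$ span a $K_r$, and on the remaining $n-r$ vertices place a copy of $T(n-r,r-1)$ with parts $P_1,\dots,P_{r-1}$, contributing $\ex(n-r,K_r)$ edges. For the crossing edges, join every vertex of $P_i$ to all of $S$ \emph{except} $s_i$ and $s_{i+1}$; equivalently, we forbid to each $u\in P_i$ exactly the $i$-th edge of the Hamiltonian path $s_1s_2\cdots s_r$ on $S$. Then every outside vertex has exactly $r-2$ neighbours in $S$, giving $(r-2)(n-r)$ crossing edges and the desired total $\binom r2+(r-2)(n-r)+\ex(n-r,K_r)$.

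It remains to check this graph has a unique $K_r$. A copy of $K_r$ meeting $V(G)\setminus S$ in $j$ vertices uses $r-j$ vertices of $S$ that are common neighbours of those $j$ outside vertices. The case $j=0$ gives $S$ itself; $j=1$ is impossible since an outside vertex has only $r-2<r-1$ neighbours in $S$; and for $j\ge2$ the $j$ outside vertices lie in $j$ distinct parts $P_{i_1},\dots,P_{i_j}$ (a clique meets each part at most once), so their common neighbourhood in $S$ is $S$ minus the vertices covered by the $j$ distinct path-edges with indices $i_1,\dots,i_j$; since $j$ distinct edges of a path span at least $j+1$ vertices, this leaves at most $r-j-1$ common neighbours, too few to supply the needed $r-j$. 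Hence $S$ spans the only $K_r$, and the construction is valid.

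The edge counts and the case analysis are routine; the one genuinely clever point — and the place where the work of finding the proof lies — is the choice of crossing edges, namely indexing the $r-1$ Turán parts by the $r-1$ consecutive edges of a Hamiltonian path on $S$, which at once keeps each outside degree into $S$ equal to $r-2$, keeps $G[V\setminus S]$ extremal, and blocks every potential new clique. (For large $n$ one could instead argue via stability that an extremal graph must be essentially of this shape, but the explicit construction makes that detour unnecessary and yields the identity for all $n\ge r$.)
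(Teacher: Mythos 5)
Your proof is correct and follows essentially the same route as the paper: the upper bound is the same edge-partition argument (the paper states it as Proposition 2.6 with the parameter $\zeta(K_r)=r-2$), and the lower bound uses the identical construction of a $K_r$ on $S$ plus a Tur\'an graph whose $i$-th part is joined to all of $S$ except $s_i,s_{i+1}$. Your uniqueness check via common neighbourhoods and the fact that $j$ distinct path-edges cover at least $j+1$ vertices is just the dual bookkeeping of the paper's count of avoided Tur\'an classes.
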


\begin{theorem}\label{haromszog}
If $n$ is large enough, we have $\exa_k(n,K_3)=\lfloor (n-1)^2/4\rfloor+k+1$.
\end{theorem}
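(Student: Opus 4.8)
The plan is to prove matching upper and lower bounds. For the lower bound, I would exhibit a graph on $n$ vertices with exactly $k$ triangles and $\lfloor (n-1)^2/4\rfloor+k+1$ edges. A natural candidate: take a balanced complete bipartite graph on $n-1$ vertices (which is triangle-free and has $\lfloor (n-1)^2/4\rfloor$ edges), add one new vertex $v$, join $v$ to two vertices $a,b$ lying in different parts (creating one triangle $vab$ via the edge $ab$), and then add $k-1$ further edges inside one of the parts, each chosen to sit inside the neighborhood-structure so that it creates exactly one new triangle — e.g.\ add edges $x_1y, x_2y, \dots$ among vertices all adjacent to a common vertex, being careful that no two added edges share the right common neighbors. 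Counting edges: $\lfloor (n-1)^2/4\rfloor + 2 + (k-1) = \lfloor (n-1)^2/4\rfloor + k+1$. One must check this construction genuinely has exactly $k$ triangles and not more; massaging which pairs get the extra edges so that triangles don't interact is the fiddly part of the lower bound.

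For the upper bound, suppose $G$ has $n$ vertices, exactly $k$ triangles, and $e(G)$ edges; I want $e(G)\le \lfloor(n-1)^2/4\rfloor+k+1$. The idea is to delete edges to kill all triangles while losing few edges, then invoke Tur\'an/Rademacher-type bounds. Since $k$ is fixed and $n$ is large, the $k$ triangles are "few": pick a set $T$ of at most $k$ edges meeting every triangle (one edge per triangle). Then $G-T$ is triangle-free on $n$ vertices, so $e(G-T)\le \lfloor n^2/4\rfloor$, giving $e(G)\le \lfloor n^2/4\rfloor+k$, which is too weak by about $n/2$. To sharpen, I would argue that $G$ cannot be close to the extremal bipartite graph: if $e(G)$ were very large (say $\ge \lfloor n^2/4\rfloor - cn$ for a suitable constant), then by a stability/supersaturation argument (Rademacher's theorem and its refinements — a graph with $\lfloor n^2/4\rfloor + t$ edges has $\Omega(tn)$ triangles, and more precisely graphs just below $\lfloor n^2/4\rfloor$ that contain a triangle contain many) the number of triangles would exceed the constant $k$. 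Concretely, by Rademacher's theorem an $n$-vertex graph with more than $\lfloor n^2/4\rfloor$ edges has at least $\lfloor n/2\rfloor$ triangles, so $e(G)\le \lfloor n^2/4\rfloor$ already; then one examines the structure of $n$-vertex graphs with close to $\lfloor n^2/4\rfloor$ edges and boundedly many triangles.

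The cleaner route for the upper bound, which I would actually pursue: let $v$ be a vertex. If $v$ lies in no triangle, then its neighborhood $N(v)$ is an independent set, $G-v$ has at most $k$ triangles, and by induction $e(G-v)\le \lfloor (n-2)^2/4\rfloor+k+1$; combined with $\deg(v)\le n-1-\alpha'$ constraints this is not quite tight, so instead I would choose $v$ to be a vertex that lies in at least one triangle but "few" — specifically a vertex of a triangle, delete it, and track how the triangle count drops. Better yet: among all triangles, these $k$ triangles span at most $3k$ vertices; let $W$ be this vertex set, $|W|\le 3k$. Then $G-W$ is triangle-free on $\ge n-3k$ vertices, so $e(G-W)\le \lfloor(n-3k)^2/4\rfloor$; the edges incident to $W$ number at most $3k(n-1)$, but one must do better — count edges within $W$ and from $W$ to the rest using the triangle-freeness of $G-W$ to bound, via a defect Tur\'an argument, the number of edges each vertex of $W$ can send across. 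The hard part will be squeezing the "$+1$" — i.e.\ showing the bound is exactly $\lfloor(n-1)^2/4\rfloor+k+1$ and not, say, $+2k$ — which forces a careful case analysis (probably on whether $G$ restricted away from the triangle-vertices is exactly a balanced complete bipartite graph, and how a triangle-creating edge can be inserted while adding the fewest edges). I expect this final optimization, pinning down the additive constant, to be the main obstacle; the rest follows standard Tur\'an-stability technology.
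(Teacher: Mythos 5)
Your lower bound construction does not work. In a balanced complete bipartite graph on $n-1$ vertices, any two vertices in the same part have roughly $(n-1)/2$ common neighbours (namely the entire other part), so every edge you add inside a part creates $\Theta(n)$ triangles, not one; no amount of ``massaging which pairs get the extra edges'' can fix this, since the common neighbourhood is forced by completeness. The construction that actually achieves the bound is different: keep $K_{\lfloor (n-1)/2\rfloor,\lceil (n-1)/2\rceil}$ intact and attach one new vertex joined to a single vertex of one class and to $k$ vertices of the other class. This adds $k+1$ edges and creates exactly $k$ triangles (one for each of the $k$ cross edges already present between the chosen neighbours), giving $\lfloor (n-1)^2/4\rfloor+k+1$ edges.

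For the upper bound you list several strategies, correctly observe that each falls short (by $\Theta(n)$ after Rademacher, or by an unspecified additive constant after deleting the at most $3k$ triangle vertices), and then explicitly defer the part that is the actual content of the theorem --- pinning down the constant $k+1$. The missing ingredients are (a) a theorem of Brouwer: a triangle-free \emph{non-bipartite} graph on $n$ vertices has at most $\lfloor (n-1)^2/4\rfloor+1$ edges, which is the source of the $(n-1)^2/4$ rather than $n^2/4$; and (b) a case split on whether some vertex $v$ lies in \emph{every} triangle. If such a $v$ exists, delete one edge at $v$ from each triangle; if the result is non-bipartite, Brouwer's bound finishes immediately, and if it is bipartite one counts directly: $v$ sends $p$ and $q$ edges to the two sides of the bipartite graph $G-v$, exactly $k$ of the $pq$ pairs are edges, so the total is at most $\lfloor (n-1)^2/4\rfloor+p+q-pq+k\le\lfloor (n-1)^2/4\rfloor+k+1$ using $(p-1)(q-1)\ge 0$. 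If no such $v$ exists, a matching argument shows the edges meeting $V(G)\setminus X$ (where $X$ is the set of triangle vertices, $|X|\le 3k$) number at most $(n-2)^2/4$, so the total is at most $(n-2)^2/4+\binom{3k}{2}<\lfloor (n-1)^2/4\rfloor$ for large $n$. Your sketch never isolates this dichotomy, and without it neither of your routes closes the gap.
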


In Section 3 we study $\exa_1(n,F)$ when $F$ is a complete bipartite graph of order $n$ and show that this is connected to a certain problem in number theory.

In Section 4 we study the search theory problem of determining  $L(n,F)$ and improve the known bounds for bipartite graphs $F$. In Section 4.1 we study variants of this problem and prove generalizations of Theorem \ref{aigtri}.

\section[Fixed bounded size F]{Fixed bounded size $\cF$}

We prove a stronger version of Proposition \ref{asym}.

\begin{proposition}\label{ok} Assume that every graph in $\cF$ is connected, and let $v$ denote the smallest order of the graphs in $\cF$. Then
$(1-2kv/n)\ex(n,\cF)\le \exa_k(n,\cF)\le \ex(n,\cF)+k = \exa_0(n,\cF) + k$.
\end{proposition}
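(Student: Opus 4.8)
The upper bound is the easy direction. Take an extremal graph $H$ on $n$ vertices with $\exa_k(n,\cF)$ edges and exactly $k$ copies (counted with the sum over $\cF$). For each such copy $F_i$, $i=1,\dots,k$, pick one edge $e_i$ lying in $F_i$. Delete the (at most $k$) edges $e_1,\dots,e_k$ from $H$; since every copy of a member of $\cF$ in the resulting graph was already a copy in $H$ and used none of the deleted edges, the new graph is $\cF$-free. Hence $\exa_k(n,\cF)-k\le\ex(n,\cF)$, i.e.\ $\exa_k(n,\cF)\le\ex(n,\cF)+k=\exa_0(n,\cF)+k$.

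For the lower bound I would start from an extremal $\cF$-free graph $G_0$ on some smaller vertex set and carefully glue in a bounded gadget that creates exactly $k$ copies. Concretely: let $m=n-2kv$ (say), take an $\ex(m,\cF)$-extremal graph $G_0$ on $m$ vertices, and we want to add the remaining $2kv$ vertices together with a bounded number of edges so as to produce precisely $k$ copies of members of $\cF$ while not disturbing the (zero) count inside $G_0$. The cleanest way: fix a graph $F^*\in\cF$ of order $v$ and, in $k$ vertex-disjoint ``pockets'' of $v$ new vertices each, place one copy of $F^*$; since the pockets are disjoint from each other and from $G_0$, and since every member of $\cF$ is connected, any copy of a member of $\cF$ in the whole graph lies entirely inside one pocket or entirely inside $G_0$ — and inside a single pocket we have exactly one copy of $F^*$ (choosing $F^*$ minimal and the pocket to be exactly a copy of $F^*$ with no extra edges guarantees it contains exactly one copy and no smaller/other member of $\cF$). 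This gives $k$ copies total. The edge count is $|E(G_0)|+k|E(F^*)|\ge \ex(m,\cF)$. Finally I would compare $\ex(m,\cF)$ with $\ex(n,\cF)$: a standard averaging/blow-up monotonicity argument gives $\ex(m,\cF)\ge \frac{m(m-1)}{n(n-1)}\ex(n,\cF)\ge (1-2kv/n)^2\ex(n,\cF)$, which is at least $(1-2kv/n)\ex(n,\cF)$ since $\ex(n,\cF)\ge 0$; so $\exa_k(n,\cF)\ge(1-2kv/n)\ex(n,\cF)$ as claimed.

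The one point needing care — and the main obstacle — is guaranteeing that the gadget contributes \emph{exactly} $k$ and not more: one must rule out copies of members of $\cF$ that straddle a pocket boundary (handled by taking the pockets to be components, using connectivity) and copies of a smaller or different member of $\cF$ created \emph{inside} a pocket or inside $G_0$ (handled by $G_0$ being $\cF$-free and by the pocket being an isolated copy of a minimum-order $F^*$, so the only member of $\cF$ it can contain is $F^*$ itself, exactly once). If $\cF$ is such that even a single copy of $F^*$ contains several copies of members of $\cF$, one instead takes each pocket to realize one ``extra'' member and adjusts the count of pockets; since $\cF$ is finite this only changes the constant in front of $v/n$, which is absorbed by choosing $m=n-2kv$ with room to spare. (One also checks $m\ge 1$, i.e.\ $n\ge 2kv$, the inequality being trivial otherwise since the right-hand side is then nonpositive.)
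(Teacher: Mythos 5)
Your upper bound is exactly the paper's argument and is fine. Your lower-bound construction (disjoint pockets, each an isolated copy of an inclusion-minimal member $F^*$ of smallest order, attached to an $\cF$-free extremal graph) is also the right idea and matches the paper's in spirit. The genuine problem is the quantitative step at the end. The averaging fact you invoke is $\ex(m,\cF)\ge\frac{m(m-1)}{n(n-1)}\ex(n,\cF)$, but with your choice $m=n-2kv$ the two inequalities you then write both point the wrong way: since $m<n$ we have $\frac{m-1}{n-1}<\frac{m}{n}$, so $\frac{m(m-1)}{n(n-1)}<\left(\frac{m}{n}\right)^2=(1-2kv/n)^2$, not $\ge$; and since $0\le 1-2kv/n\le 1$ we have $(1-2kv/n)^2\le 1-2kv/n$, so the square is \emph{smaller}, not larger. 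What your argument actually proves is $\exa_k(n,\cF)\ge\frac{(n-2kv)(n-2kv-1)}{n(n-1)}\ex(n,\cF)$, which is strictly weaker than the claimed $(1-2kv/n)\ex(n,\cF)$.

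There are two clean fixes. One stays within your approach: you only need $kv$ new vertices for the pockets (your factor of $2$ is not needed, since an inclusion-minimal $F^*$ contains exactly one member of $\cF$), and with $m=n-kv$ one checks $(n-kv)(n-kv-1)-(n-2kv)(n-1)=kv(kv-1)\ge0$, so $\frac{m(m-1)}{n(n-1)}\ge 1-\frac{2kv}{n}$ and the claimed bound follows. The other is the paper's route, which avoids the subset-averaging loss altogether: take an $\cF$-free graph on all $n$ vertices with $\ex(n,\cF)$ edges, delete the $kv$ vertices of smallest degree (their degree sum is at most $2kv\,\ex(n,\cF)/n$, so at most that many edges are lost), and reuse the freed vertices for the $k$ disjoint copies of $F^*$; this directly yields the factor $1-2kv/n$.
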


\begin{proof}
The second inequality is obvious, as if we take a graph $G$ with $k$ copies of graphs from $\cF$, then we can obtain an $\cF$-free graph by deleting an edge from each copy.

\noindent In order to prove the first inequality, let $G$ be an $\cF$-free graph  of order $n$ with $m=\ex(n,\cF)$ edges, and let $V_0$ be a set of $kv$ vertices of $G$ of the smallest degrees. By a simple averaging argument, the degree sum of the vertices in $V_0$ is at most $2kvm/n$, thus by deleting $V_0$ from $G$ we obtain a graph $G'$ with at least $(1-2kv/n)\ex(n,\cF)$ edges. Now let $F\in \cF$ be a graph of order $v$, such that $F$ is inclusion minimal in $\cF$, and let $G''$ be the vertex disjoint union of $G'$ and $k$ vertex disjoint copies of $F$. Since all graphs in $\cF$ are connected, $G''$ contains exactly $k$ copies of graphs from $\cF$ and at least $(1-2kv/n)\ex(n,\cF)$ edges, finishing the proof.
\end{proof}

Let us consider now disconnected graphs. It is well-known that if $F$ has components  $F_1,\dots,F_p$, then 
the asymptotics of $\ex(n,F)$ is determined by the component with the largest chromatic number. Interestingly, in our case the asymptotics is given by the component with the smallest chromatic number. More precisely, if $q$ is the smallest and $r$ is the largest chromatic number of the components $F_i$, then the following hold. By the Erd\H os-Stone-Simonovits theorem \cite{ersi,es}  $\ex(n,F)=(1+o(1))(1-\frac{1}{r-1})\binom{n}{2}$, while 
Aigner and Triesch \cite{aitr} showed that $\exa_1(n,F)=(1+o(1))(1-\frac{1}{q-1})\binom{n}{2}$ (actually they proved $L(n,F)=(1+o(1))\frac{1}{q-1}\binom{n}{2}$). Observe that if $q=2$, we only obtain the upper bound $o(n^2)$. Here we improve this latter bound, and also extend it to $\exa_k(n,F)$.

\begin{proposition} Let $F$ be the vertex disjoint union of its connected subgraphs $F_1,\dots,F_p$. Assume $\exa_k(n,F)$ exists, i.e. there exists a graph of order $n$ containing exactly $k$ copies of $F$. Then
$\ex(n,\{F_1,\dots,F_p\})(1-2k|V(F)|/n)\le \exa_k(n,F)\le \ex(n,\{F_1,\dots,F_p\})+O(n)$.
\end{proposition}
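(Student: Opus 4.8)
The plan is to prove the two inequalities separately. For the lower bound $\ex(n,\{F_1,\dots,F_p\})(1-2k|V(F)|/n)\le \exa_k(n,F)$, I would mimic the construction from the proof of Proposition~\ref{ok}: start with a $\{F_1,\dots,F_p\}$-free graph $G$ on $n$ vertices with $\ex(n,\{F_1,\dots,F_p\})$ edges, delete the $k|V(F)|$ vertices of smallest degree to get $G'$ (which loses at most a $2k|V(F)|/n$ fraction of the edges by averaging), and then take the vertex-disjoint union of $G'$ with $k$ disjoint copies of $F$. Since $G'$ contains no copy of any $F_i$, a copy of $F$ in $G''$ must use components entirely within the copies-of-$F$ part; one needs to check this yields exactly $k$ copies of $F$ (this is where the "exactly" requires a short argument — the $k$ disjoint copies of $F$ together contain exactly $k$ copies of $F$ as a subgraph when the $F_i$ are the components, provided one counts copies correctly; if there is ambiguity one can instead take the copies of $F$ far apart, or note that the count is some explicit constant and adjust).

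**Handling the "exactly $k$" subtlety.** Actually the cleanest route for the lower bound: the hypothesis already assumes $\exa_k(n,F)$ exists, so there is some graph $H_0$ of order $n_0$ (a constant) containing exactly $k$ copies of $F$; take the disjoint union of $H_0$ with a $\{F_1,\dots,F_p\}$-free graph $G'$ on $n-n_0$ vertices of maximum edge count, after the same smallest-degree-deletion trick to control the loss. Then any copy of $F$ in the union lies entirely in $H_0$ (since $G'$ has no $F_i$, hence no component of $F$, hence cannot host an $F$ either, nor can $F$ straddle the two parts as $F$ is a disjoint union whose every component would have to live in one side), so the union has exactly $k$ copies. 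The edge loss from deleting $n_0 + k|V(F)|$-many low-degree vertices is $O(n)$, but to match the stated constant $1-2k|V(F)|/n$ one should be slightly more careful; I would just delete enough vertices to absorb $H_0$ into the "$O(n)$" bookkeeping on the lower side too, or accept the cleaner asymptotic form — the stated bound is essentially $\ex(n,\{F_1,\dots,F_p\}) - O(n)$ on both sides anyway.

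**The upper bound.** For $\exa_k(n,F)\le \ex(n,\{F_1,\dots,F_p\})+O(n)$, let $H$ be an $n$-vertex graph with exactly $k$ copies of $F$ and $\exa_k(n,F)$ edges. The key observation is that $H$ cannot contain too many disjoint copies of any single $F_i$: if $H$ contained $p\cdot(k+1)$ pairwise vertex-disjoint copies of (the union over $i$ of all the) $F_i$'s arranged so that some $k+1$ of them could be completed to disjoint copies of $F$... more carefully: if for each $i$ the graph $H$ had a large collection of disjoint copies of $F_i$, one could assemble $k+1$ disjoint copies of $F$, contradiction. So there is a bounded set $W\subseteq V(H)$, $|W| = O(1)$ actually — wait, it should be $O(1)$ many vertices? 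No: a maximal packing of disjoint copies of any $F_i$ has bounded size (at most roughly $k$ per component type, else we build $k+1$ copies of $F$), so removing the $O(1)$ vertices covered by these maximal packings leaves a graph $H - W$ that is $\{F_1,\dots,F_p\}$-free, hence has at most $\ex(n,\{F_1,\dots,F_p\})$ edges; and $|W| = O(1)$ means we only removed $O(n)$ edges. The main obstacle is making the packing argument precise: one must argue that if $H$ has more than some constant $c(k,F)$ disjoint copies of $F_i$ for every $i$ simultaneously — or rather, set it up so that a large $F_i$-packing for even one $i$ combined with the structure forces $k+1$ copies of $F$. The clean statement: if $H$ has $k+1$ vertex-disjoint copies of $F$ then we already have $>k$ copies; and $F$-free-ness of $H$ is too strong, so instead: let $\nu_i$ be the max number of disjoint $F_i$-copies in $H$; if $\min_i \nu_i \ge k+1$ we can greedily build $k+1$ disjoint copies of $F$ (pick the first $F$ from one copy of each $F_i$, delete, repeat — possible since each $\nu_i \ge k+1 > $ number used). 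So $\min_i \nu_i \le k$, i.e. some $F_{i_0}$ has a maximal disjoint packing of $\le k$ copies, covering $\le k|V(F_{i_0})|$ vertices; deleting these makes $H$ free of $F_{i_0}$, hence free of $F$ — but we want free of $\{F_1,\dots,F_p\}$, which is stronger. Hmm — so I'd instead remove, for \emph{each} $i$ with $\nu_i \le k$, its maximal packing; that covers $O(1)$ vertices and kills all $F_i$ with small packing number, but an $F_i$ with $\nu_i \ge k+1$ survives. The resolution: at least one $i$ has $\nu_i \le k$ (shown above), and it suffices to delete that one packing to destroy all copies of $F$ (since $F$ needs that component), giving an $F$-free graph — but $F$-free only gives $\ex(n,F) = (1-1/(r-1)+o(1))\binom n2$, the wrong bound. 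So the real content must be: one shows $H$ minus $O(1)$ vertices is $\{F_1,\dots,F_p\}$-free. This forces showing $\nu_i \le k$ for \emph{every} $i$, which is false in general (a graph can have exactly $k$ copies of $F = F_1 \cup F_2$ yet have many copies of $F_1$ if it has no $F_2$ at all). I think the intended argument is: either some $\nu_i$ is small (bounded), delete its packing, done via $\ex(n, F_i) \le \ex(n,\{F_1,\dots,F_p\}) + O(1)$... no. Let me reconsider: perhaps the point is that $\exa_0(n, F_i) \le \ex(n,\{F_1,\ldots,F_p\})$ trivially for each $i$, so if $H$ becomes $F_{i_0}$-free after deleting $O(1)$ vertices, then it has $\le \ex(n-O(1), F_{i_0}) + O(n) \le \ex(n,\{F_1,\dots,F_p\}) + O(n)$ edges since making a graph $F_{i_0}$-free is a \emph{subcase} of making it $\{F_1,\ldots,F_p\}$-free — wait no, $\ex(n,F_{i_0}) \ge \ex(n,\{F_1,\dots,F_p\})$, wrong direction again. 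The correct relation: $\{F_1,\dots,F_p\}$-free means free of \emph{all} of them, so $\ex(n,\{F_1,\dots,F_p\}) \le \ex(n, F_i)$ for each $i$ — so knowing $H-W$ is $F_{i_0}$-free gives only $\ex(n,F_{i_0})$, which is \emph{larger}. I believe the genuine argument (and the main obstacle) is: show that for \emph{every} $i$, $\nu_i(H) = O(1)$. This is where "exactly $k$ copies of $F$" must be used cleverly: if $\nu_{i_0}$ were unbounded for some $i_0$, one shows $H$ must be entirely lacking some \emph{other} $F_{i_1}$ except on a bounded set (else combine), and then... this is genuinely the crux and I would expect to spend the bulk of the proof on a careful packing/deletion argument establishing that $H$ minus $O(1)$ vertices contains no $F_i$ for any $i$, after which the upper bound follows by monotonicity of $\ex$.
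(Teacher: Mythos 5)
Your lower bound is essentially the paper's argument: delete the $k|V(F)|$ lowest-degree vertices from an extremal $\{F_1,\dots,F_p\}$-free graph and attach a smallest graph containing exactly $k$ copies of $F$ (such a graph exists and has at most $k|V(F)|$ vertices --- restrict the assumed witness to the vertices covered by its $k$ copies). Your concern that $k$ disjoint copies of $F$ need not contain exactly $k$ copies of $F$ is legitimate, and your fix is the right one.

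The upper bound, however, has a genuine gap: you never complete it, and the packing argument you pursue is a detour that the paper does not need. The missing idea is a one-line component-swapping argument. Let $H$ have exactly $k\ge 1$ copies of $F$ and let $X$ be the set of vertices incident to an edge of one of these $k$ copies, so $|X|\le k|V(F)|$. Then the graph induced on $Y=V(H)\setminus X$ is $\{F_1,\dots,F_p\}$-free: if $Y$ contained a copy $C$ of some $F_i$, take any one of the $k$ copies of $F$ (one exists since $k\ge 1$), discard its $F_i$-component and substitute $C$; since $C$ is vertex-disjoint from $X$, the result is again a copy of $F$, and it is distinct from all $k$ existing copies because its edges are not all contained in $X$. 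This contradicts ``exactly $k$''. Hence at most $\ex(n,\{F_1,\dots,F_p\})$ edges lie inside $Y$, and at most $|X|\cdot n=O(n)$ edges meet $X$, which is the claimed bound. Note also that your objection that ``$\nu_i\le k$ for every $i$ is false in general'' is built on a graph with no copy of $F_2$ at all, i.e.\ on the case $k=0$, which is outside the scope of the statement; for $k\ge 1$ even a single copy of any $F_i$ disjoint from $X$ is already impossible, so no quantitative packing bound is needed.
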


\begin{proof}
For the lower bound, we proceed similarly to the proof of Proposition \ref{ok}: we take an $\{F_1,\dots,F_p\}$-free graph of order $n$ with $\ex(n,\{F_1,\dots,F_p\})$ edges, delete  the $k|V(F)|$ vertices of the smallest degree, and add a graph of the smallest possible order containing exactly $k$ copies of $F$  (note that it has at most $k|V(F)|$ vertices).

\noindent For the upper bound, let us consider a graph of order $n$ with $\exa_k(n,F)$ edges that contains exactly $k$ copies of $F$. There are at most $k|V(F)|$ vertices incident to edges of these $k$ copies of $F$, and there are at most $k|V(F)|n=O(n)$ edges incident to these vertices. The rest of the vertices must induce an $\{F_1,\dots,F_p\}$-free graph, thus there are at most $\ex(n,\{F_1,\dots,F_p\})$ edges induced by them, concluding the proof. 
\end{proof}

Note that $\ex(n,\{F_1,\dots,F_p\})\le \ex(n,F_i)$ for every $i\le p$. A conjecture of Erd\H os and Simonovits \cite{erdsim} states that there is an $i\le p$, such that $\ex(n,\{F_1,\dots,F_p\})=\Theta(\ex(n,F_i))$. 
If this conjecture holds, then again only the component with the smallest extremal number matters.

The above proposition gives the asymptotics of $\exa_k(n,F)$ if $\ex(n,F)$ is superlinear, i.e. when no $F_i$ is a tree, otherwise we only obtain an $O(n)$ upper bound. Observe that this does not even necessarily give the order of magnitude, as $\ex(n,\{F_1,\dots,F_p\})$ might be a constant, for example if $F_1=K_2$. Indeed, the value $\exa_k(n,F)$ can also be a constant, e.g. if $F$ is a matching of two edges, then $\exa_1(n,F)=4$ for every $n\ge 4$.

\bigskip

It is easy to see that there is room for improvement in the lower bound of Proposition~\ref{ok}. If the sum of the $kv$ smallest degrees is close to $2kvm/n$, we can try to choose them in such a way that they span some edges. Also, we can often add exactly $k$ copies of $F$ using less than $kv$ vertices. Finally, we might also add some edges between the vertices of the $k$ copies of $F$ and the remaining vertices.

The upper bound can often be improved, as well, since adding an edge to an $\cF$-free graph with $\ex(n,\cF)$ edges often creates several copies of members from $\cF$. 
An example where the upper bound is almost sharp is the star $S_r$ with $r$ leaves. It is obvious that for $n\ge r$, the maximum number of edges of an $S_r$-free graph of order $n$ is $\lfloor n(r-1)/2\rfloor$.
It is also easy to see that for $k<n$ we have $\exa_k(n,S_r)\ge ex(n,S_r)+\lfloor k/2\rfloor -1$  and if both $n(r-1)$ and $k$ are even, then actually $\exa_k(n,S_r)= n(r-1)/2+k/2$. 

Now let us denote by $\zeta(G)$ the largest number $z$, such that it is possible to add a new vertex $v$ together with $z$ edges incident to $v$ to a copy of $G$ without creating another copy of $G$. We obviously have $\zeta(G)\ge \delta(G)-1$ (where $\delta(G)$ is the minimum degree in $G$). For example, $\zeta(K_r)=r-2$. The following proposition is not much harder to prove.

\begin{proposition}\label{trivi} Let $v$ be the order of $F$. Then $$\exa_1(n,F)\le \binom{v}{2}+\zeta(F)(n-v)+\ex(n-v,F).$$

\end{proposition}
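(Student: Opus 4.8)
The plan is to take a graph $G$ on $n$ vertices with exactly one copy of $F$, fix that copy $F_0$ (a vertex set of size $v$ together with the edges of $F$ inside it), and bound the edges of $G$ by partitioning them into three groups: edges inside $V(F_0)$, edges between $V(F_0)$ and the remaining $n-v$ vertices, and edges inside $W := V(G) \setminus V(F_0)$. The first group contributes at most $\binom{v}{2}$ trivially. For the third group, note that $G[W]$ must be $F$-free: any copy of $F$ inside $W$ would be a \emph{second} copy of $F$ in $G$ (it is distinct from $F_0$ since it avoids $V(F_0)$ entirely), contradicting that $G$ has exactly one copy. Hence $G[W]$ has at most $\ex(n-v,F)$ edges.

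The crux is the middle group: the edges between $V(F_0)$ and $W$. I would argue vertex by vertex over $W$. Fix $w \in W$ and let $d$ be the number of neighbours of $w$ in $V(F_0)$. Consider the subgraph of $G$ induced on $V(F_0) \cup \{w\}$: it consists of the copy $F_0$ of $F$ with one extra vertex $w$ joined to $d$ of its vertices. If $d > \zeta(F)$, then by the definition of $\zeta(F)$ this configuration must contain a second copy of $F$ (one different from $F_0$), again contradicting uniqueness. Therefore each $w \in W$ sends at most $\zeta(F)$ edges to $V(F_0)$, so the middle group has at most $\zeta(F)(n-v)$ edges in total. Summing the three bounds gives $\binom{v}{2} + \zeta(F)(n-v) + \ex(n-v,F)$, as claimed.

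The one point that needs a little care — and which I expect is the main (mild) obstacle — is making precise the claim that a copy of $F$ in $G[V(F_0)\cup\{w\}]$ using the vertex $w$ is genuinely a \emph{different} copy from $F_0$: since $w \notin V(F_0)$, the two copies occupy different vertex sets, hence are distinct as subgraphs, so this is immediate once stated. One should also note that the definition of $\zeta(F)$ refers to adding a vertex to \emph{a} copy of $F$ without creating another copy of $F$, which is exactly the local situation at $w$; so the contrapositive ("more than $\zeta(F)$ new edges forces another copy") applies verbatim. No nontrivial extremal input beyond the definition of $\ex$ and $\zeta$ is needed, so the argument is short.
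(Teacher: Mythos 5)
Your proof is correct and follows essentially the same route as the paper's: the paper uses exactly this three-way edge decomposition (inside the unique copy, between the copy and the rest, and inside the rest), bounding the middle part by $\zeta(F)$ per outside vertex and the last part by $\ex(n-v,F)$. You merely spell out the details the paper leaves implicit, such as why a vertex of $W$ with more than $\zeta(F)$ neighbours in $V(F_0)$ forces a second copy.
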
 

\begin{proof} Let $G$ be a graph of order $n$ with $\exa_1(n,F)$ edges and let us consider the single copy of $F$ in $G$. The vertices of $F$ span at most $\binom{v}{2}$ edges and there are at most $\zeta(F)(n-v)$ edges between the vertices of $F$ and the set of the rest of the vertices $V'$. Finally, we have at most $\ex(n-v,F)$ edges spanned by $V'$.
\end{proof}

Clearly, the first term $\binom{v}{2}$ is sharp only for a clique. Notice that Theorem \ref{klikk} states that for cliques we have equality in Proposition \ref{trivi}. 


\begin{proof}[Proof of Theorem \ref{klikk}] The upper bound follows from Proposition \ref{trivi}. For the lower bound, we take a copy of $K_r$ with vertices $v_1,\dots,v_r$ and a Tur\'an graph on the other $n-r$ vertices with classes $V_1,\dots,V_{r-1}$. Then we connect each vertex in $V_i$ to every vertex of the $K_r$, except for $v_i$ and $v_{i+1}$. This way we constructed a graph $G$ with the desired number of edges. 

Assume there is a copy of $K_r$ in $G$ besides the one on vertices $v_1,\dots,v_r$. We denote this other copy of $K_r$ by $K$. Then $K$ contains $p$ vertices from $\{v_1,\dots,v_{r}\}$ for some $p<r$. We claim that $K$ avoids $p$ classes. Indeed, $K$ avoids the classes with the same indices as its vertices, thus we are done unless $K$ contains $v_r$. Let $q$ be the largest index of a vertex of $K$ such that $v_{q-1}\not\in K$. Then $q>1$ because $K$ is not the original copy of $K_r$. Then $K$ also avoids $V_{q-1}$, thus $K$ avoids at least $p$ classes indeed.
As $K$ contains at most one vertex from each of the remaining classes, there are at most $p+r-1-p=r-1$ vertices in $K$, a contradiction.
\end{proof}

Recall that Theorem \ref{haromszog} states that if $n$ is large enough, then $\exa_k(n,K_3)=\lfloor (n-1)^2/4\rfloor+k+1$.
We use a theorem of Brouwer \cite{br}, that determines the largest number of edges in $K_{r+1}$-free graphs on $n$ vertices that are not $r$-partite, assuming $n\ge 2r+1$. 
For triangles it states that if $H$ is triangle-free and non-bipartite, then $H$ has at most $\lfloor n^2/4\rfloor-\lfloor n/2\rfloor+1=\lfloor (n-1)^2/4\rfloor+1$ edges.


\begin{proof}[Proof of Theorem \ref{haromszog}]
For the lower bound we take the complete bipartite graph $K_{\lfloor \frac{n-1}{2}\rfloor,\lceil \frac{n-1}{2}\rceil}$ 
and an additional vertex connected to one vertex from one class and $k$ vertices from the other class.

For the upper bound, let $G$ be a graph containing exactly $k$ triangles. 

\textbf{Case 1.} There is a vertex $v$ contained in every triangle of $G$. Let $G'$ be a graph obtained by deleting an edge incident to $v$ from each of the $k$ triangles. If $G'$ is not bipartite, then it has at most $\lfloor (n-1)^2/4\rfloor+1$ edges, thus $G$ has at most $\lfloor (n-1)^2/4\rfloor+k+1$ edges and we are done. Thus we may assume that $G'$ is bipartite.

Let now $G''$ be the graph obtained by deleting $v$ from $G$. $G''$ is bipartite as it is a subgraph of $G'$, which is bipartite. Let $v$ be connected to a set $P$ of $p$ vertices in one of the parts and a set $Q$ of $q$ vertices in the other part of $G''$. Then there must be exactly $k$ edges between $P$ and $Q$ (thus $pq\ge k$). It means that $pq-k$ edges are missing there, thus the total number of edges is at most $\lfloor (n-1)^2/4\rfloor +p+q-pq+k \le \lfloor (n-1)^2/4\rfloor +k+1$. In the inequality we used that $(p-1)(q-1)\ge 0$. 


\textbf{Case 2.} There is no vertex contained in every triangle of $G$. 
Let $X$ be the set of vertices contained in some triangle of $G$ and let $x=|X|$. Let $Y=V(G)\setminus X$ and $G'''$ be the subgraph of $G$ induced by $Y$. Observe that for any edge $uv$ in $Y$ the two endvertices are connected to at most $x-2$ vertices from $X$, otherwise their neighborhoods (which are disjoint) would avoid only one vertex of $X$. As this vertex is not contained in every triangle of $G$, there is a triangle (in $X$), such that each of its vertices are connected to $u$ or $v$. Two of its vertices are connected to the same vertex, say $u$, thus $u$ is in a triangle, a contradiction.

We claim that the number of edges $f$ of $G$ incident to the vertices in $Y$ is at most $(n-2)^2/4$. In order to see this, let us choose a maximal matching $M$ in $G'''$ and suppose it has $m$ edges. By the previous observation, for each edge of $M$ there are at most $x-2$ edges going from its two endvertices to $X$. Also, from every other vertex of $Y$ there are at most $x-2$ edges going from this vertex to $X$. Thus we have at most $ (n-x-m)(x-2)$ edges between $X$ and $Y$. 

Claim 12 from \cite{gyk} implies that $Y$ induces at most $m(n-x-m)$ edges. For the sake of completeness, we include here another simple argument showing this. As $M$ is a maximal matching, inside $Y$ there are no edges between vertices not incident to $M$. As $G'''$ is triangle-free, in $Y$ every vertex is connected to at most one endvertex of an edge from $M$ and thus the $n-x-2m$ vertices non-incident to $M$ have degree at most $m$ inside $Y$. Furthermore, the $2m$ vertices spanning $M$ induce a triangle-free graph, thus inducing at most $m^2$ edges. This implies that $Y$ induces at most $(n-x-2m)m+m^2=m(n-x-m)$ edges. 

Altogether, $$f\le (n-x-m)(x-2)+m(n-x-m).$$ After rearranging we obtain $$f\le (n-2)^2/4-(2m-n+2x-2)^2/4\le (n-2)^2/4,$$ as claimed.

This implies $|E(G)|\le (n-2)^2/4+\binom{x}{2}$. As $x\le 3k$, we have that $|E(G)|$ is smaller than $(n-1)^2/4$ for $n$ large enough, and we are done.
%
\end{proof}

We note that the proof implies stability in the sense that if the extremal graph does not have a vertex, such that every triangle is incident to it, then the graph has at most $(n-2)^2/4+O(1)$ edges.


\section{Complete bipartite graphs on \textit{n} vertices}

In this section, we study $\exa_1(n,G)$ if $G$ has $n$ vertices. Therefore, we write $\exa_1(G)$ instead of $\exa_1(n,G)$.


Consider $G=K_{A,B}$, i.e., a complete bipartite graph whose parts have $A$ and $B$ vertices, where $A+B=n$.
We say that $A=\sum_{i=1}^a A_i$ and $B=\sum_{j=1}^b B_j$ is a \emph{unique partition} of $A$ and $B$ if there is no other way to partition the $a+b$ natural numbers $A_i,B_j$ into two parts, whose sum is $A$ and $B$, respectively.
This is meant to also imply that $A_i\ne B_j$, but we allow $A_i=A_{i'}$.
For example, $6=3+3$ and $53=13\times 4+1$ is a unique partition of $6$ and $53$, and so is $6=3+3$ and $53=4\times 13+1$ (indeed, the only way to obtain $6$ as a sum of some of the integers $3,3,13,13,13,13,1$ is $3+3$), but $6=3+3$ and $53=50+3$ is not.
If $A=B$, then $A=\sum_{j=1}^b B_j$ and $B=\sum_{i=1}^a A_i$ counts as the same partition.
For example, $6=3+3$ and $6=2+2+2$ is a unique partition of $6$ and $6$, but $6=3+3$ and $6=3+3$ is not.

Let $mup(A,B)=\max \{a+b\mid$ there is a unique partition $A=\sum_{i=1}^a A_i$ and $B=\sum_{j=1}^b B_j\}$.
Notice that $mup(A,B)$ is well-defined for all $A,B$, except $A=B=1$, as there is at least one unique partition: if $A\le B$, then we can take $B$ and $A$ pieces of 1`s.
We define $mup(1,1)=2$.
Observe that $A+B$ is a trivial upper bound on $mup(A,B)$, since $a\le A$ and $b\le B$, thus $A+1\le mup(A,B)\le A+B$.

\begin{proposition} For any $A,B$ we have  
$\exa_1(K_{A,B})=\binom n2-A-B+mup(A,B)$.
\end{proposition}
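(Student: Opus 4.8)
The plan is to prove matching upper and lower bounds for $\exa_1(K_{A,B})$, both governed by the combinatorial quantity $mup(A,B)$.

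For the \emph{lower bound}, I would start from a unique partition $A=\sum_{i=1}^a A_i$, $B=\sum_{j=1}^b B_j$ with $a+b=mup(A,B)$, and build an $n$-vertex graph as follows. Take a complete bipartite graph $K_{A,B}$ on parts $X$ (size $A$) and $Y$ (size $B$); this is the unique copy of $K_{A,B}$ we want. Now partition $X$ into blocks of sizes $A_1,\dots,A_a$ and $Y$ into blocks of sizes $B_1,\dots,B_b$, and add \emph{all} edges inside each block as well as all edges between two blocks whenever they lie in different parts but we have not already used them --- more precisely, the construction should turn the complement into exactly a disjoint union of cliques, one on each block, so that the number of non-edges is exactly $\binom{A_1}{2}+\dots+\binom{B_b}{2}$. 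Wait --- this is not quite it; the clean way is: let the \emph{non-edges} of $G$ be exactly the edges inside each of the $a+b$ blocks. Then a copy of $K_{A,B}$ in $G$ is a pair of disjoint independent sets of sizes $A$ and $B$ with all cross-edges present; an independent set of $G$ is contained in a single block, so a $K_{A,B}$ forces a way of selecting blocks whose sizes sum to $A$ and to $B$ --- and uniqueness of the partition forces this to be the original $X,Y$ split. The number of edges is then $\binom n2 - \sum\binom{A_i}{2} - \sum\binom{B_j}{2}$, which is maximized (given the block count $a+b$) by taking all blocks of size as equal as possible; but in fact to get the stated formula one wants blocks as \emph{small} as possible, i.e. as many blocks as possible, which is exactly why $mup$ (the \emph{max} number of parts) appears: more parts means smaller $\binom{\cdot}{2}$ terms. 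I need to double-check that with $a+b$ parts the minimum of $\sum\binom{A_i}{2}+\sum\binom{B_j}{2}$ over all block-size choices realizing a unique partition with $a+b$ parts equals $A+B-(a+b) = n - mup(A,B)$ --- this holds precisely when every block has size $1$ or $2$... and in general the extremal unique partition can be taken with parts of size $1$ and $2$, giving $\sum \binom{A_i}{2} = A - a$ and similarly for $B$. So the edge count is $\binom n2 - (A-a) - (B-b) = \binom n2 - A - B + mup(A,B)$, as desired.

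For the \emph{upper bound}, let $G$ be any $n$-vertex graph with exactly one copy of $K_{A,B}$, sitting on parts $X,Y$. The key claim is that the complement $\overline G$ must contain, inside $X\cup Y$, a vertex-disjoint family of cliques covering $X\cup Y$ whose block-size multiset gives a \emph{unique} partition of $A$ and $B$: if two independent sets in $G$ of the right sizes could be reassembled differently into parts summing to $A$ and $B$, we would get a second copy of $K_{A,B}$. Concretely, consider the equivalence-like structure given by "non-adjacency components" of $G$ restricted to $X$ and to $Y$; uniqueness of the $K_{A,B}$ translates exactly into uniqueness of the corresponding partition, so the number of non-edges of $G$ inside $X\cup Y$ is at least $\sum\binom{|C|}{2}\ge (A+B) - (\text{number of blocks}) \ge (A+B) - mup(A,B)$, and there can be additional non-edges involving vertices outside $X\cup Y$ only helping the bound. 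Hence $|E(G)| \le \binom n2 - (A+B) + mup(A,B)$.

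The \textbf{main obstacle} I anticipate is making the uniqueness correspondence fully rigorous in the upper bound: one must argue that a hypothetical "different" way of grouping the non-edge blocks of $X\cup Y$ into two classes of sizes $A$ and $B$ genuinely yields an \emph{honest second} $K_{A,B}$ in $G$ (disjoint parts, all cross edges present, and \emph{not} isomorphic-as-a-labelled-subgraph to the first), and conversely that any second copy of $K_{A,B}$ must have each of its two sides be a union of non-edge blocks. Handling the degenerate case $A=B$ (where swapping the two sides is not a "different" copy, matching the convention in the definition of unique partition) and the edge case $A=B=1$ (the definition $mup(1,1)=2$) requires a little care but should pose no real difficulty once the block-correspondence is set up cleanly.
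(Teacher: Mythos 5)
There is a genuine gap, and it stems from one misconception that infects both halves of your plan: you treat the two sides of a copy of $K_{A,B}$ as if they had to be \emph{independent sets} of $G$. A copy of $K_{A,B}$ here is a not-necessarily-induced subgraph; since $|V(K_{A,B})|=n$, it is simply a bipartition $(X,Y)$ of $V(G)$ with $|X|=A$, $|Y|=B$ and every cross pair an edge --- edges inside $X$ or $Y$ are irrelevant. Equivalently, each connected component of $\overline G$ must lie entirely in $X$ or entirely in $Y$, so copies of $K_{A,B}$ correspond to groupings of the \emph{components} of $\overline G$, and a block of size $c$ only needs to be \emph{connected} in $\overline G$ (a spanning tree, costing $c-1$ non-edges), not complete (costing $\binom{c}{2}$). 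The paper's construction accordingly takes $\overline G$ to be a disjoint union of \emph{trees} on $A_1,\dots,A_a,B_1,\dots,B_b$ vertices, for exactly $n-mup(A,B)$ non-edges. Your clique construction yields only $\binom n2-\sum\binom{A_i}{2}-\sum\binom{B_j}{2}$ edges, strictly below the claimed value whenever some part has size at least $3$, and your proposed rescue --- that an optimal unique partition can always be taken with parts of size $1$ and $2$ --- is false. For instance $mup(4,4)=5$ is attained only by $4=1+1+1+1$ together with the single part $4$ (recall parts of $A$ may not equal parts of $B$, and the multiset $\{1,1,1,1,2,2\}$ admits the regrouping $\{1,1,2\},\{1,1,2\}$, so it is not unique); more generally, Proposition~\ref{prop:nuc} shows that for fixed $c$ the optimal parts have size $\nu$, the smallest non-divisor of $c$, which exceeds $2$ for many $c$. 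So the lower bound, as written, does not reach the claimed value.

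The upper bound suffers from the same confusion but is salvageable. The number of non-edges inside a component $C$ of $\overline G$ is at least $|C|-1$ by connectivity, not $\binom{|C|}{2}$ --- nothing forces a component of $\overline G$ to be a clique of $\overline G$. Summing $|C|-1$ over components, and using that uniqueness of the copy forces the multiset of component sizes to form a unique partition with at most $mup(A,B)$ parts, gives $e(\overline G)\ge n-mup(A,B)$, which is exactly the paper's argument (in an extremal $H$ the complement must be a forest). Thus your upper half reaches the right bound via a false intermediate inequality, while your lower half genuinely falls short; both are repaired by replacing ``clique blocks'' with ``connected (tree) blocks'' throughout.
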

\begin{proof}
To prove $\binom n2-A-B+mup(A,B)\le \exa_1(K_{A,B})$, consider a unique partition $A=\sum_{i=1}^a A_i$ and $B=\sum_{j=1}^b B_j$ of $A$ and $B$ into $a+b=mup(A,B)$ parts.
Consider now the $n=A+B$ vertex graph $H$ that is the complement of the vertex disjoint union of $mup(A,B)$ trees  on $A_1,\ldots,A_a,B_1,\ldots,B_b$ vertices, respectively.
Since $\sum (A_i-1)+\sum (B_j-1)=A+B-a-b$, $H$ has $\binom n2-A-B+mup(A,B)$ edges.
It follows from the definition of $mup(A,B)$ that $H$ contains exactly one copy of $K_{A,B}$.

For the proof of the other direction, suppose that $H$ contains at most one copy of $K_{A,B}$ and the number of its edges is maximum among such graphs.
Then the complement of $H$ must be a forest.
With a similar counting as the one above, we obtain $\binom n2-A-B+mup(A,B)\ge \exa_1(K_{A,B})$.
\end{proof}

We are not aware of any results concerning $mup(A,B)$; here 
we prove the following simple bounds. 

\begin{proposition}\label{prop:nuc}
If $n>c$, then
$mup(n,c)= \frac n{\nu}+O_c(1)$, where $\nu$ denotes the smallest \emph{non}-divisor of $c$.
Moreover, in an optimal unique partition, all but $O_c(1)$ parts are equal to $\nu$.
\end{proposition}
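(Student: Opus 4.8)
The plan is to prove upper and lower bounds on $mup(n,c)$ that match up to an additive $O_c(1)$ term, with the value $\frac{n}{\nu}$ coming from using as many parts equal to $\nu$ as possible.

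\textbf{Lower bound.} First I would exhibit a unique partition with roughly $n/\nu$ parts. The natural candidate: partition $c$ as $c=\nu\cdot\lfloor c/\nu\rfloor + r$ where $0\le r<\nu$ (so the parts of $c$ are $\lfloor c/\nu\rfloor$ copies of $\nu$ and one part equal to $r$, or no leftover part if $r=0$), and partition $n$ as $n=\nu\cdot\lfloor n/\nu\rfloor + s$ using $\lfloor n/\nu\rfloor$ copies of $\nu$ plus a correction part of size $s<\nu$; if $s$ would collide with an existing part or create ambiguity, absorb it into a single slightly larger part. This gives $\frac{n}{\nu}+O_c(1)$ parts total. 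The key point to verify is \emph{uniqueness}: we must check that no sub-multiset of all the parts sums to $c$ except the intended one. Since $\nu$ is the smallest non-divisor of $c$, every integer $1,2,\dots,\nu-1$ divides $c$; the multiset of parts is dominated by copies of $\nu$, and a sub-multiset summing to $c$ would have to use $t$ copies of $\nu$ plus some $O_c(1)$ small parts, forcing $c-t\nu$ to be expressible using the few small parts — a finite case analysis that pins down $t=\lfloor c/\nu\rfloor$ and the small parts to be exactly those assigned to $c$. I would isolate this as a small claim and handle the boundedly many exceptional small parts by brute force.

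\textbf{Upper bound.} Suppose $n=\sum_{i=1}^a A_i$, $c=\sum_{j=1}^b B_j$ is a unique partition with $a+b$ parts. The goal is $a+b\le \frac{n}{\nu}+O_c(1)$, equivalently that all but $O_c(1)$ of the $A_i$ equal $\nu$ (the $B_j$ are automatically $O_c(1)$ in number since $b\le c$). Two things must be shown: (i) at most $O_c(1)$ of the parts $A_i$ are $\ge \nu+1$ (these contribute little to the count relative to their size, so many of them is fine for the bound — actually I only need that the number of parts of size $1$ is small and the number of parts of size in $\{2,\dots,\nu-1\}$ is small); (ii) crucially, only $O_c(1)$ parts can equal each value $d\in\{1,2,\dots,\nu-1\}$. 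For (ii): if $d\mid c$ and there are $\ge c/d + (\text{something})$ parts equal to $d$ on the $A$-side, then $c/d$ of them sum to $c$, giving an alternative partition of $n$ and $c$ (move those $d$'s to the $c$-side, move the genuine $B_j$'s of total $c$ to the $n$-side) — contradicting uniqueness, \emph{unless} the multiset $\{B_j\}$ happens to also be $c/d$ copies of $d$, in which case $n\le$ something bounded or we are in the symmetric $A=B$ edge case. So each $d<\nu$ occurs $O_c(1)$ times; summing over the finitely many values $d<\nu$ gives that all but $O_c(1)$ parts are $\ge\nu$. Finally, a part of size $\ge\nu$ contributes $1$ to the count per $\ge\nu$ units of $n$, so $a\le \frac{n}{\nu}+O_c(1)$, and the "all but $O_c(1)$ parts equal $\nu$" statement follows because parts strictly larger than $\nu$ also consume extra mass: if there were more than $O_c(1)$ parts of size $\ge\nu+1$... actually that alone doesn't contradict anything, so the sharp "all but $O_c(1)$ equal $\nu$" needs the counting $a+b = \frac{n}{\nu}+O_c(1)$ combined with $\sum A_i + \sum B_j = n$, which forces $\sum (A_i-\nu) = O_c(1)$ over parts $\ge\nu$, hence $O_c(1)$ parts exceed $\nu$.

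\textbf{Main obstacle.} The delicate point is the interaction with the $A=B$ symmetry convention and the degenerate cases ($c$ small, $n$ just above $c$, the $mup(1,1)=2$ exception) — the uniqueness argument in step (ii) has a genuine loophole when both sides consist of copies of the same divisor $d$, and one must argue that this forces $n$ and $c$ to both be $O_c(1)$ or handle it via the "same partition" convention. I expect the cleanest route is to first reduce to $n$ sufficiently large compared to $c$ (absorbing small $n$ into the $O_c(1)$), then the loophole disappears because the $B$-side has total only $c$ while the $A$-side needs total $n\gg c$, so the two multisets cannot coincide and the swap genuinely produces a new partition. Everything else is bookkeeping over the $\le\nu\le c+1$ residue classes and the boundedly many oversized parts.
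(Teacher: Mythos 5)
Your proposal overlooks the clause in the definition of a unique partition that no value may occur as a part on both sides (``this is meant to also imply that $A_i\ne B_j$''), and that clause is load-bearing. It breaks your lower bound: partitioning \emph{both} $c$ and $n$ into parts of size $\nu$ (plus remainders) puts $\nu$-sized parts on both sides, so the resulting object is not a unique partition at all. The paper instead leaves $c$ in a single part of size $c$ and splits $n$ into parts of size $\nu$ plus one part of size at least $c+1$; that large part cannot be used to build $c$, the $\nu$'s cannot sum to $c$ since $\nu\nmid c$, and no value is repeated across the two sides, so uniqueness is immediate and one still gets $n/\nu-O_c(1)$ parts. The same clause is what closes the loophole you correctly flag in step (ii) of your upper bound, and your proposed escape does not work: if the $c$-side consists of $c/d$ copies of $d$ and the $n$-side also contains copies of $d$, then moving $c/d$ of the $n$-side's $d$'s to the $c$-side and the old $c$-side back yields the \emph{same} multiset partition, no matter how large $n$ is. Indeed, without the $A_i\ne B_j$ clause the proposition is simply false: $n=2+\dots+2$ together with $4=2+2$ would count as ``unique'' with roughly $n/2$ parts, far exceeding $n/\nu=n/3$. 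With the clause, all parts of a fixed size $d<\nu$ (note every such $d$ divides $c$) lie on one side; at most $c/d$ fit on the $c$-side, and if $c/d$ of them lay on the $n$-side one could assemble $c$ from them, producing a genuinely different partition because the old $c$-side contains no $d$'s. That is exactly the paper's argument, giving at most $\sum_{d<\nu}c/d=O_c(1)$ parts below $\nu$ and hence at most $(n+c)/\nu+O_c(1)$ parts in total.

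On the positive side, your derivation of the ``moreover'' clause by mass counting --- the number of parts is $n/\nu+O_c(1)$ from above and $n/\nu-O_c(1)$ from below, the total mass is $n+c$, so $\sum(A_i-\nu)$ over the parts of size at least $\nu$ is $O_c(1)$ and only $O_c(1)$ parts can exceed $\nu$ --- is correct once the two bounds are in place, and is arguably cleaner than the paper's route, which replaces $\nu$ oversized parts by $\nu$-sized ones and must then re-verify that the modified partition is still unique. But as written the proof has a genuine gap until the lower-bound construction and the step (ii) loophole are repaired via the $A_i\ne B_j$ condition.
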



\begin{proof}
For the lower bound, leave $c$ undivided (i.e., in one part of size $c$), and divide $n$ into  parts of size $\nu$, except for one part of size at least $c+1$ but otherwise as small as possible.
As this last part is larger than $c$, we have to use it for $n$, and from the size $\nu$ parts we cannot build $c$, as $\nu$ does not divide $c$. Altogether we have $\frac{n}{\nu}-O_c(1)$ parts, as required.

For the upper bound, notice that we can have at most $\frac cd$ parts of size $d$ for any divisor $d$ of $c$.
Indeed, if we had more, then from $\frac cd$ parts of size $d$ we could build $c$, and put another part of size $d$ in $n$, which means that the partition was not unique, by our definition.
Therefore, we can have at most $\sum_{d=1}^{\nu-1} \frac cd\le c(\ln \nu+1)$ such parts of size at most $\nu-1$.
As all other parts have size at least $\nu$, there can be at most $\frac{n+c}\nu$ of them.
In total, we have at most $c(\ln \nu+1) + \frac{n+c}{\nu}=\frac{n}{\nu}+O_c(1)$ parts.

Moreover, if there are at least $\nu$ parts larger than $\nu$, then we can select some of them whose sum is divisible by $\nu$, and replace them with $\nu$-sized parts.
This way we obtain more parts, and the partition is surely still unique, provided we already had at least $c/\nu$ $\nu$-sized parts, since then at least one of these needed to go to $n$, so no $\nu$ can be used to build $c$. On the other hand, if there had been less than $c/\nu$ $\nu$-sized parts, then we would have had at most $(n-O_c(1))/(\nu+1)>n/\nu+O_c(1)$, contradicting the first part of the proposition.
This finishes the proof of the ``moreover'' part of the statement.
\end{proof}

The constants hidden in the $O_c(1)$ term can be probably easily improved further using a bit of number theory.
We can also prove the following, somewhat stronger statement.

\begin{proposition}
If $n>N(c)$, then $mup(n+\nu,c)=mup(n,c)+1$, where $\nu$ is the smallest non-divisor of $c$.
\end{proposition}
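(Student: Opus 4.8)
The plan is to prove the two inequalities $mup(n+\nu,c)\ge mup(n,c)+1$ and $mup(n+\nu,c)\le mup(n,c)+1$ separately, using \pref{nuc} as the main input, and to take $N(c)$ large enough that the counting estimates below are valid. The one structural fact I will extract from \pref{nuc} is the following: for a pair $(m,c)$ with $m$ large, every optimal unique partition contains strictly more than $c$ parts equal to $\nu$ (this follows from the ``moreover'' clause: there are $m/\nu+O_c(1)$ parts in total and all but $O_c(1)$ of them equal $\nu$, so more than $c$ of them equal $\nu$ once $m$ is large); consequently, since the side summing to $c$ has at most $c$ parts and the two sides of a unique partition use disjoint sets of part-sizes, all these $\nu$-parts lie on the side summing to $m$, and in particular the $c$-side contains no part of size $\nu$. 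This is exactly the room needed to slide a single extra $\nu$-part on or off the large side.

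For the lower bound I would take an optimal unique partition $\Pi$ of $(n,c)$ and add one more part of size $\nu$ to its $n$-side, producing a partition $\Pi'$ of $(n+\nu,c)$ with one additional part; the two sides still use disjoint part-sizes because the $n$-side already used the value $\nu$. The work is in verifying that $\Pi'$ is still unique: given any partition of the enlarged multiset into a part summing to $n+\nu$ and a part summing to $c$ with the two sides using disjoint part-sizes, the fact that there are more than $c$ copies of $\nu$ forces the $c$-side to use none of them, so the big side contains every copy of $\nu$; deleting one copy from the big side then produces a partition of the original multiset for $(n,c)$ with disjoint part-sizes, which by uniqueness of $\Pi$ must equal $\Pi$, so the given partition was $\Pi'$ all along. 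For the upper bound I would run this in reverse: starting from an optimal unique partition $\Pi'$ of $(n+\nu,c)$, delete one $\nu$-part from its large side (one exists by the structural fact applied to $(n+\nu,c)$) to get a partition $\Pi$ of $(n,c)$ with one fewer part, and then check that $\Pi$ is unique by augmenting any competitor with the deleted $\nu$ on its big side — legitimate since the multiset of $\Pi$ still has more than $c$ copies of $\nu$, forcing the $c$-side of the competitor to avoid $\nu$ — which yields a competitor for $(n+\nu,c)$ and hence, by uniqueness of $\Pi'$, equals $\Pi'$, so the original competitor equals $\Pi$.

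The only genuine obstacle I anticipate is the bookkeeping forced by the ``disjoint part-sizes'' requirement in the definition of a unique partition, and specifically ruling out the degenerate possibility that the extra $\nu$-part ends up on the side summing to $c$; this is precisely where the bound ``more than $c$ parts equal $\nu$'' is used, and it is the reason the hypothesis must be $n>N(c)$ rather than merely $n>c$. The remaining points — that $mup$ is defined for the pairs $(n,c)$ and $(n+\nu,c)$ when $n$ is large, that the number of parts changes by exactly one in each direction, and that the objects constructed are honest partitions in the sense of the definition — are all immediate.
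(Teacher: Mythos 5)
Your proof follows the paper's argument essentially verbatim: both directions add or remove a single $\nu$-part on the large side and transfer any competing partition back to the original pair, using the ``moreover'' clause of \pref{nuc} to guarantee the needed $\nu$-parts (a step you make more explicit than the paper does). The only quibble is that when verifying uniqueness you quantify over competitors ``with the two sides using disjoint part-sizes,'' whereas the definition requires ruling out every re-partition of the multiset; this is harmless, since more than $c$ copies of $\nu$ cannot all lie on the $c$-side, so any competitor's large side contains a $\nu$ and your deletion argument applies unchanged.
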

\begin{proof}
We use the ``moreover'' part of Proposition \ref{prop:nuc}.
The bound $mup(n+\nu,c)\ge mup(n,c)+1$ follows by adding one more part of size $\nu$ to $n$, while the bound $mup(n+\nu,c)\le mup(n,c)+1$ follows by taking away a part of size $\nu$ from $n+\nu$.  

In the lower bound, if the resulting partition was not unique, then $n+\nu=\sum_{i=1}^{a} A_i=\sum_{i=1}^{a'} A'_i$, where the integers $A_i'$ are chosen from $A_j,B_\ell$. We can assume without loss of generality that $A_1=A_1'=\nu$. Then $n=\sum_{i=2}^{a} A_i=\sum_{i=2}^{a'} A'_i$, thus the original partition of $n$ was not unique.

Similarly, in the upper bound, if the resulting partition was not unique, then $n=\sum_{i=1}^{a} A_i=\sum_{i=1}^{a'} A'_i$, where the integers $A_i'$ are chosen from $A_j,B_\ell$. Let $A_0=A_0'=\nu$, then $n+\nu=\sum_{i=0}^{a} A_i=\sum_{i=0}^{a} A'_i$, thus the original partition of $n+\nu$ was not unique.
\end{proof}

Exact values for some small numbers $c$ and $n$, and a discussion on the problem can be found at \url{https://mathoverflow.net/questions/345548}.

\section{Searching for subgraphs}\label{search}

Let us consider first $L(n,F)$ for a fixed graph $F$. Aigner and Triesch \cite{aitr} showed that if $F$ is connected, then $L(n,F)=(1+o(1))(\binom{n}{2}-ex(n,F))=(1+o(1))\frac{1}{\chi(F)-1}\binom{n}{2}$,  
while if $F$ is disconnected and $q$ is the smallest number among the chromatic numbers of its components, then $L(n,F)=(1+o(1))\frac{1}{q-1}\binom{n}{2}$.

Observe that for graphs with chromatic number two, the above result gives $L(n,F)=\binom{n}{2}-o(n^2)$. This is quite sharp, but we can obtain a better bound on $\binom{n}{2}-L(n,F)$ by exploring the connection of $L(n,F)$ and $\exa_1(n,F)$ further.

\begin{proposition} \label{lando} Let $F$ be a fixed connected graph. Then $\binom{n}{2}\le L(n,F)+\exa_1(n,F)\le \binom{n}{2}+O(n)$. 
Thus, if $F$ is not a tree then $L(n,F)=\binom{n}{2}-(1+o(1))ex(n,F)$.

\end{proposition}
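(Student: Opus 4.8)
The statement has two parts. The lower bound $\binom{n}{2}\le L(n,F)+\exa_1(n,F)$ is just \tref{aigtri} (with $\cF=\{F\}$), since that theorem gives $L(n,F)\ge\binom n2-\exa_1(n,F)$; so nothing new is needed there. The content is the upper bound $L(n,F)\le\binom n2-\exa_1(n,F)+O(n)$, which says that the Adversary's ``answer NO whenever possible'' strategy analysed in \tref{aigtri} is essentially optimal for the Questioner up to an additive $O(n)$ term, where the implied constant may depend on $F$.

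Here is the plan for the upper bound. I would describe an explicit Questioner strategy. Let $H$ be an extremal graph for $\exa_1(n,F)$: an $n$-vertex graph with $\exa_1(n,F)$ edges containing exactly one copy of $F$, say on a vertex set $W$ with $|W|=|V(F)|=v$. The Questioner first asks all $\binom n2-\exa_1(n,F)$ non-edges of $H$. If every answer is NO, then the hidden graph $F_0$ is a subgraph of $H$; since $F$ is connected, every copy of $F$ in $H$ lies entirely in $W$ (the rest of $H$ being $F$-free after the $W$-part is discarded — more precisely, $H$ has exactly one copy of $F$ and it sits on $W$), so it remains only to locate $F_0$ inside the edge set of $H$ restricted to $W\cup(\text{a bounded neighbourhood})$. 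In the worst case $F_0$ touches only $O(v)$ vertices, and among those vertices the number of pairs is $O(v^2)=O(1)$; but $F_0$ could in principle use any $v$ of the $n$ vertices, so what we actually need is: after the NO-answers pin down $F_0\subseteq H$, the uncertainty about $F_0$ is confined to a subgraph of $H$ with $O(n)$ edges. This is where one invokes something like \pref{trivi}: in $H$, the copy of $F$ plus its at most $\zeta(F)(n-v)$ incident edges to the outside, plus the $\ex(n-v,F)$-edge remainder — the remainder is $F$-free and, being connected, cannot host $F_0$, so $F_0$ lives in the first $\binom v2+\zeta(F)(n-v)=O(n)$ edges. Query all of those as well. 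If some NO-answer appears earlier, the Adversary has revealed a non-edge of $H$; here one argues that such an answer can only happen $O(n)$ times before $F_0$ is determined, e.g. by a potential/counting argument on how many candidate copies of $F$ remain, or simply by noting the Questioner can then restart the bookkeeping against a smaller extremal configuration. Either way the total is $\binom n2-\exa_1(n,F)+O(n)$.

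I would structure the formal write-up as: (i) quote \tref{aigtri} for the lower bound; (ii) fix the extremal $H$ and its unique copy of $F$ on $W$; (iii) have the Questioner query the $\le\binom n2-\exa_1(n,F)$ complement-edges of $H$ together with the $O(n)$ edges of $H$ that \pref{trivi}'s decomposition flags as ``potentially part of $F_0$'' (the clique on $v$ vertices and the $\zeta(F)(n-v)$ cross-edges, for every such configuration — but since $\exa_1(n,F)$ is realised, one configuration suffices up to the $O(n)$ slack); (iv) check that after all YES/NO answers the graph $F_0$ is uniquely determined, using connectivity of $F$ to rule out copies straddling the $F$-free ``bulk'' of $H$; (v) conclude $L(n,F)\le\binom n2-\exa_1(n,F)+O(n)$. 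The final sentence ``$L(n,F)=\binom n2-(1+o(1))\ex(n,F)$ if $F$ is not a tree'' then follows by combining the two-sided bound with \pref{asym}/\pref{ok}: when $F$ is connected and not a tree, $\ex(n,F)$ is superlinear, so $\exa_1(n,F)=(1+o(1))\ex(n,F)$ by \pref{ok} (the additive $+k$ and the multiplicative $(1-2v/n)$ are absorbed), hence $L(n,F)=\binom n2-\exa_1(n,F)\pm O(n)=\binom n2-(1+o(1))\ex(n,F)$ since $O(n)=o(\ex(n,F))$.

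The main obstacle, I expect, is step (iv)/(iii): making precise that a \emph{single} $O(n)$-sized batch of edge-queries beyond the complement of $H$ really does pin down $F_0$ no matter how the Adversary answers. The subtlety is that the Adversary is not forced to make $F_0\subseteq H$; a NO to a query that is an edge of $H$ is allowed, and then the earlier analysis (which assumed the all-NO branch) breaks. I would handle this by a monovariant: each NO-answer to an $H$-edge either immediately finishes identification or strictly decreases the number of copies of $F$ still consistent with the transcript, and an extremal-type bound shows that only $O(n)$ such steps can occur — alternatively, phrase the whole strategy adaptively, re-selecting the extremal template after each surprising NO, and bound the number of re-selections. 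Getting the constant in the $O(n)$ to come out clean (it will depend on $v=|V(F)|$ and $\zeta(F)$) is routine once the combinatorial skeleton is fixed.
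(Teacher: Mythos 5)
Your lower bound and the derivation of the final sentence from Proposition \ref{asym}/Proposition \ref{ok} match the paper, and your opening move for the upper bound (query all non-edges of an extremal graph $H$ realising $\exa_1(n,F)$) is also the paper's. But there is a genuine gap in the rest, and you have the easy and hard branches backwards. If every non-edge of $H$ is answered NO, then $F_0\subseteq H$ and, since $H$ contains exactly one copy of $F$, the Questioner is already done --- no second batch of queries, no decomposition via Proposition \ref{trivi}, and no worry about ``NO answers to $H$-edges'' is needed (such answers are perfectly consistent with $F_0\subseteq H$ and cause no difficulty for identification). The case that actually requires work is a YES answer to a non-edge of $H$: then $F_0\not\subseteq H$, the uniqueness of the copy of $F$ in $H$ tells you nothing, and your write-up never resolves this case. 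Your two suggested fixes do not obviously give $O(n)$: the number of copies of $F$ consistent with the transcript can be $\Theta(n^{|V(F)|})$, so ``strictly decreases'' only bounds the number of surprising steps by a polynomial of that degree, and ``re-selecting the extremal template'' comes with no bound on the number of re-selections or their cumulative cost.

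The paper closes the hard case with one short idea that is missing from your proposal: a YES answer hands the Questioner an edge $uv$ of $F_0$; she then queries all $O(n)$ pairs incident to $u$ and $v$, learns every edge of $F_0$ at those vertices, and repeats from each newly discovered vertex of $F_0$. Because $F$ is connected and has $O(1)$ vertices, this breadth-first exploration terminates after $O(1)$ rounds of $O(n)$ queries each and determines $F_0$ completely, for a total of at most $\binom{n}{2}-\exa_1(n,F)+O(n)$ queries. This is exactly where the hypothesis that $F$ is connected is used; your proposal invokes connectivity only to rule out copies straddling the $F$-free bulk of $H$, which is not where it is needed.
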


\begin{proof} The first inequality is Theorem \ref{aigtri} which was proved by Aigner and Triesch, we will later also prove a more general version, see Proposition \ref{vesszo}. For the second inequality, let $G$ be a graph of order $n$ containing a unique copy of $F$ with $\exa_1(n,F)$ edges. Let us ask now the edges not in $G$ as queries. If all the answers are NO, then the unique copy of $F$ in $G$ is the one we wanted to find and we are done. Otherwise, if we obtain a YES answer at least once, then by asking all the edges incident to the endpoints of the edge with the YES answer, we can find another edge of $F$ with $O(n)$ further queries. Repeating this process by asking each edge incident to an endpoint of an already found edge of the copy of $F$, we can find the copy using $O(1)O(n)$ queries, since $F$ is fixed and connected.

When $F$ is not a tree, the second statement follows by using Proposition \ref{asym}.
\end{proof}



We remark that if there exists a graph $G$ of order $n$ with maximum degree $\Delta$ containing a unique copy of $F$, then the above proof gives the upper bound $L(n,F)\le \binom{n}{2}-|E(G)|+O(\Delta)$. In particular, if $F$ is a tree of order $t$, it is widely believed that $\lfloor n/(t-1)\rfloor$ copies of $K_{t-1}$ (note that this graph avoids $F$) have $(1+o(1))ex(n,F)$ edges (Erd\H os-S\'os conjecture). This can be easily modified by changing only $O(1)$ edges to have a unique copy of $F$: remove all edges incident to some $t$-tuple of the vertices, and put a copy of $F$ on this $t$-tuple. 
Thus, if the Erd\H os-S\'os conjecture holds, then the second statement in Proposition \ref{lando} follows for trees as well.

\bigskip
Now let us turn our attention to the case when the order of $F$ can grow with $n$.
Aigner \cite{aig} mentions that $L(n,\cF)$ might  always be close to $\binom{n}{2}-\exa_1(n,\cF)$ in some sense. We show that this is not the case. Let us denote by $K_n^-$ the graph that we obtain by deleting a single edge from $K_n$. Then $\exa_1(n,K_n^-)=\binom{n}{2}-1$. Yet also $L(n,K_n^-)=\binom{n}{2}-1$. This follows by applying the Adversary strategy opposite to the one described in the Introduction: if all the answers are YES, we cannot identify $K_n^-$ if there are exist two pairs not yet asked. Note that the Adversary strategy of answering NO as long as possible, would be quite inefficient for $K_n^-$.

\subsection{Variants of the search theory problem}

Here we initiate the study of two different variants of $L(n,\cF)$. In the first case we count only the queries that were answered NO. The goal of Questioner is to finish the algorithm and identify $F\in\cF$ after as few NO answers as possible, while the goal of Adversary is the opposite.
We denote the length of this game (i.e. the number of queries needed in the worst case, for the best algorithm) by $x(n,\cF)$ . 

For the second variant, it is not enough to identify $F\in\cF$, we also have to prove that Questioner actually identified it, she has to ask every edge of it as a query, i.e. after identifying it, he has to ask the edges of $F$ that were not previously asked. We denote  the length of this game by $x'(n,\cF)$. 

\begin{proposition}\label{xx}
If every $F\in\cF$ has the same number $e$ of edges, then we have $x(n,\cF)+e=x'(n,\cF)$.
\end{proposition}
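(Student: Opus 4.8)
The plan is to prove the two inequalities $x'(n,\cF) \le x(n,\cF) + e$ and $x'(n,\cF) \ge x(n,\cF) + e$ separately by relating optimal strategies in the two games.

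For the inequality $x'(n,\cF) \le x(n,\cF) + e$, I would take an optimal Questioner strategy for the $x$-game and augment it: run the $x$-strategy until $F_0$ is identified (which costs at most $x(n,\cF)$ NO-answers, by definition of $x$), and then simply ask all edges of $F_0$ that have not yet been queried. The point is that at the moment of identification, the Questioner knows exactly which $e$ edges constitute $F_0$, so at most $e$ further queries are needed, and all of these are answered YES. Hence the number of NO-answers in the combined strategy is still at most $x(n,\cF)$, and additionally at most $e$ YES-answers occur in the final phase; but $x'$ counts \emph{all} queries, so I need to be a little careful: actually $x'$ counts total queries, so I must argue that the total count is bounded. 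Here I would instead phrase it as: the $x'$-game length equals (number of NO-answers) $+$ (number of YES-answers), and in the $x$-optimal strategy every YES-answer reveals one edge of $F_0$, so there are at most $e$ of them throughout the whole run; thus the total is at most $x(n,\cF) + e$. (One should double-check that in the $x$-game the Questioner never needs more than $e$ YES-answers — this holds because the YES-answers form a subgraph of the unique copy $F_0$, which has exactly $e$ edges.)

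For the reverse inequality $x'(n,\cF) \ge x(n,\cF) + e$, I would argue via the Adversary. Given an Adversary strategy that is optimal for the $x$-game (forcing at least $x(n,\cF)$ NO-answers), I want to derive an Adversary strategy for the $x'$-game forcing at least $x(n,\cF) + e$ total queries. The Adversary plays the $x$-optimal strategy; this forces at least $x(n,\cF)$ NO-answers. Since the final graph $F_0$ has exactly $e$ edges and each must eventually be asked in the $x'$-game (by the rules of that variant), there are at least $e$ YES-answers as well. These $e$ YES-queries are disjoint from the NO-queries, so the total number of queries is at least $x(n,\cF) + e$. The subtlety is ensuring the $e$ YES-edges and the $x(n,\cF)$ NO-edges really are counted without overlap — they are, since a query is answered either YES or NO but not both, and the $e$ edges of $F_0$ are precisely those answered YES.

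The main obstacle I anticipate is making the bookkeeping between the two games fully rigorous, in particular handling the fact that $x$ counts only NO-answers while $x'$ counts everything: I need the clean observation that in \emph{any} run of either game the YES-answers are exactly the edges of the final copy $F_0$, hence number exactly $e$ (in a run of $x'$) or at most $e$ (in a run of $x$, where the Questioner may stop early). Once that is established, both inequalities follow by the strategy-transfer arguments above. A secondary point to check is that an optimal Questioner in the $x$-game can be assumed not to waste queries after identification — but this is immaterial since we are free to append the edge-revealing phase ourselves when bounding $x'$ from above, and when bounding from below we use that the $x'$-rules \emph{force} those $e$ edges to be asked regardless.
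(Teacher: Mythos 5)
Your proof is correct and follows essentially the same approach as the paper: both directions rest on the observation that the YES answers in any run are exactly (a subset of) the $e$ edges of the final copy $F_0$, so the same algorithm is optimal for both games up to the additive term $e$. The only cosmetic difference is that for the inequality $x'(n,\cF)\ge x(n,\cF)+e$ you transfer an optimal Adversary strategy from the $x$-game to the $x'$-game, whereas the paper equivalently transfers the Questioner's optimal $x'$-strategy to the $x$-game, which avoids having to invoke the existence of a single optimal Adversary.
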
 

\begin{proof} The same algorithm is optimal for both problems.
After identifying $F\in\cF$ using $x(n,\cF)$ queries with NO answers, and (say) $m$ YES answers, Questioner can query the $e-m$ other edges of $F$. This shows $x'(n,\cF)\le x(n,\cF)+e$. On the other hand, if Questioner asks $x'(n,\cF)$ queries, identifies $F\in\cF$ and asks all its edges, then she also identified $F$ with $x'(n,\cF)-e$ NO answers, thus $x(n,\cF)\le x'(n,\cF)-e$.
\end{proof}

Our original motivation to study this problem came from a mathematical puzzle posed by G\'asp\'ar \cite{qubit}, where a single player plays a game of \textit{memory}, but without any visual memory. In the original \textit{memory} game, $n$  cards are placed on a table randomly, with their identical backsides up. On the front side, they form $n/2$ identical pairs. The players flip two cards at a time, winning that pair if they are identical. If not, the turn ends, and the next player follows. In the single player version, the goal is to find all the pairs with as few flips as possible. The point of the game is to remember the front side of the cards flipped earlier. However, in G\'asp\'ar's variant the player cannot remember the front side of the cards, only the pairs he already checked. This is equivalent to finding the matching corresponding to the pairs. In this setting, it is natural to ask that the pairs should be found by flipping them up. Thus, it is not enough to identify the matching, but all its edges should be asked, hence here  $x'(n,F)$ is the most natural parameter to study.

To connect the search problem to an extremal problem, we introduce another parameter. Let 
$$\exa'_1(n,\cF)=\max\{|E(G)|-|E(F)|: F\in\cF, |V(G)|=n, $$ $$ \text{ s.t. $G$ contains exactly one copy of $F$ and no copies of other graphs from $\cF$}\}.$$
We are still looking for a graph $G$ of order $n$ that contains a unique graph from $\cF$, but now we only count the edges of $G$ not participating in this copy. Obviously we have $exa'_1(n,\cF)\le \exa_1(n,\cF)$.

\begin{proposition}\label{vesszo} For any $\cF$ and $n$ we have

(\textbf{i}) $\binom{n}{2}-\exa_1(n,\cF)\le x(n,\cF)\le L(n,\cF)\le x'(n,\cF)$,

(\textbf{ii}) $\binom{n}{2}-\exa'_1(n,\cF)\le x'(n,\cF)$.
\end{proposition}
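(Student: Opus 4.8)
My plan is to obtain the three inequalities of part (i) by comparing the three games directly, and then to get the remaining inequality of (i) together with (ii) from a single Adversary strategy followed by a short count. For the chain $x(n,\cF)\le L(n,\cF)\le x'(n,\cF)$: the three games have the same Adversary and differ only in the success condition and in what is being counted. For $x(n,\cF)\le L(n,\cF)$, run an optimal strategy of the $L$-game inside the $x$-game; it identifies the hidden copy after at most $L(n,\cF)$ queries in total, hence after at most $L(n,\cF)$ NO answers. For $L(n,\cF)\le x'(n,\cF)$, run an optimal strategy of the $x'$-game inside the $L$-game; identifying the hidden copy is part of what it does, and the number of queries is the cost in both games, so $L(n,\cF)\le x'(n,\cF)$.

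For the two lower bounds, fix the Adversary strategy ``answer NO to a query whenever some labeled copy of a member of $\cF$ that is still consistent with all previous answers avoids that pair, and answer YES only when this is impossible,'' play it against an arbitrary Questioner, and let the game end with hidden copy $F_0$, the unique labeled copy consistent with all the answers. Let $G$ be the graph on the $n$ vertices whose edge set consists of the pairs answered YES together with the pairs never queried. The key claim is that $G$ contains exactly one labeled copy of a member of $\cF$, namely $F_0$: if $F'\subseteq G$ then no edge of $F'$ was answered NO, so were $F'$ inconsistent with the final history it would first become inconsistent at a YES answer to a pair $q\notin E(F')$ --- but then $F'$ was still consistent and avoided $q$ at that moment, so the rule would have forced NO there, a contradiction; hence $F'$ is consistent at the end and $F'=F_0$. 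Since consistency is monotone along the history, $F_0$ is consistent with every prefix, so no edge of $F_0$ was ever answered NO, and $E(F_0)$ lies among the YES pairs.

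Now count: with $Y,N$ the numbers of YES and NO answers and $T=Y+N$ the number of queries, $|E(G)|=Y+\binom{n}{2}-T=\binom{n}{2}-N$. Since $G$ contains exactly one copy of a member of $\cF$, $\binom{n}{2}-N=|E(G)|\le\exa_1(n,\cF)$, so $N\ge\binom{n}{2}-\exa_1(n,\cF)$ against every Questioner, which gives $x(n,\cF)\ge\binom{n}{2}-\exa_1(n,\cF)$ --- the NO-answer refinement of the Aigner--Triesch argument behind Theorem~\ref{aigtri}. For (ii), in the $x'$-game every edge of $F_0$ is queried and, being never answered NO, is answered YES, so $Y\ge|E(F_0)|$; the pair $(G,F_0)$ is then admissible in the definition of $\exa'_1(n,\cF)$, so $\binom{n}{2}-N-|E(F_0)|=|E(G)|-|E(F_0)|\le\exa'_1(n,\cF)$, whence $T=Y+N\ge|E(F_0)|+N\ge\binom{n}{2}-\exa'_1(n,\cF)$, i.e.\ $x'(n,\cF)\ge\binom{n}{2}-\exa'_1(n,\cF)$.

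The only step that is not pure bookkeeping, and the one I expect to need care, is the key claim that the ``YES $+$ unqueried'' graph $G$ carries exactly one copy: $G$ keeps all YES pairs but drops all NO pairs, so a priori it could contain a copy missing some YES pair, and it is precisely the ``NO whenever possible'' rule --- traced back to the first moment at which such a rival copy would be killed --- that excludes this. Everything after that is arithmetic with $Y$, $N$ and $\binom{n}{2}$.
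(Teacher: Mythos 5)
Your proposal is correct and follows essentially the same route as the paper: the chain $x\le L\le x'$ from the definitions, the ``answer NO whenever possible'' Adversary, the key claim that the graph of YES-plus-unqueried pairs carries exactly one copy (your trace back to the first YES that would kill a rival copy is the same argument as the paper's observation that the Adversary could have answered NO there), and the same final count $|E_y\cup E_0|=\binom{n}{2}-|E_n|$. No substantive difference.
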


\begin{proof}
The last two inequalities of \textbf{(i)} immediately follow from the definitions. We will prove the remaining two statements together, by describing an adversarial strategy.
During the game at any given time, let $E_y$ be the set of queries answered YES, $E_n$ be the set of queries answered NO, and $E_0$ be the set of pairs not asked. 

The Adversary has the following strategy in both variants: he answers NO to the queries whenever he can, i.e. he answers YES only if otherwise there would be no remaining copy of an $F$ from $\cF$ in $E_y\cup E_0$. First we claim that this strategy is valid, i.e. at the end of the algorithm the Questioner can indeed identify an $F$ from $\cF$. This means that there is exactly one copy of an $F$ from $\cF$ that contains $E_y$ and is contained in $E_y\cup E_0$. Clearly there is at least one such copy, and if there are multiple copies, then the algorithm has not been finished.

We claim that each copy of an $F$ from $\cF$ that is contained in $E_y\cup E_0$ must contain $E_y$. Indeed, if an edge $e\in E_y$ is not contained in such an $F$, then the Adversary could answer NO to that edge. Therefore, at the end of the algorithm, there is only one copy of an $F$ from $\cF$ that is contained in $E_y\cup E_0$. This implies $|E_y\cup E_0|\le \exa_1(n,F)$ and $|E_0|\le \exa'_1(n,F)$. On the other hand, $x(n,F)\ge |E_n|$ and $x'(n,\cF)\ge |E_y|+|E_n|$. Combining these yields the statements.
\end{proof}

Let us return now to the graph $K_n^-$. We have seen that $\exa_1(n,K_n^-)=\binom{n}{2}-1$, thus $x(n,K_n^-)\ge 1$. Indeed, the Adversary strategy of answering NO immediately also shows $x(n,K_n^-)\ge 1$. On the other hand, $x(n,K_n^-)\le 1$, as there cannot be more than one NO answer, since there is only one missing edge.

Let us examine now the families studied by Aigner and Triesch \cite{aitr}. For the matching on $\ell$ vertices and for the complete graph on $\ell$ vertices, they showed $L(n,F)=\binom{n}{2}-\exa_1(n,F)$, which implies $x(n,F)=L(n,F)$, by Proposition \ref{vesszo} and thus also determines $x'(n,F)$, by Proposition \ref{xx}. 

For the star $S_n$ on $n$ vertices, they showed that $\exa_1(n,S_n)=\lfloor (n-1)^2/2\rfloor=\binom{n}{2}-\lfloor n/2\rfloor$, while $L(n,S_n)=\lceil n/2\rceil$, i.e. there is a difference of 1 if $n$ is odd. We show that there is no such difference when considering $x(n,S_n)$, as we have $x(n,S_n)=\lfloor n/2\rfloor$. Indeed, let $n$ be odd, then the Questioner first queries a matching of $\lfloor n/2\rfloor$ edges. If all the answers are NO, the missing vertex is the center of the star, if there is a YES answer to an edge $uv$, the Questioner asks an edge $uw$. If the answer is YES, $u$ is the center, otherwise $v$ is. 

For the family $\cT$ of all trees on $n$ vertices, obviously we have $\exa_1(n,\cT)=n-1$, and Aigner and Triesch \cite{aitr} showed $L(n,\cT)=\binom{n}{2}-1$. Observe that again the difference comes from the YES answers, as there are at most $\binom{n}{2}-(n-1)$ NO answers, showing $x(n,\cT)=\binom{n}{2}-(n-1)$.

\bigskip

These examples may suggest that $x(n,F)=\binom{n}{2}-\exa_1(n,F)$ holds in general. While we have found no single graph $F$ with $x(n,F)>\binom{n}{2}-\exa_1(n,F)$, we expect that there exist such graphs. For families, the situation is different. Let $\cF=\cT\cup \{K_{n-1}\}$. Then obviously $\exa_1(n,\cF)=\binom{n-1}{2}$. On the other hand, assume that the Adversary decides that the unique copy of $F\in \cF$ is a tree. Even if the Questioner knows that, he needs at least $x(n,\cT)=\binom{n}{2}-(n-1)$ queries that are answered NO. 

The difference above clearly comes from the fact that $\exa_1(G)$ is given by one member of $\cF$, while $x(n,\cF)$ is given by the other members. 
However, it is easy to see that $\exa'_1(n,\cF)=0$, which implies $x'(n,\cF)=\binom{n}{2}$. Actually, similarly to the relation between $x(n,F)$ and $\exa_1(n,F)$,  we do not have any example with a strict inequality in \textbf{(ii)} of Proposition \ref{vesszo}.

\bigskip

\textbf{Funding}: Research supported by the National Research, Development and Innovation Office - NKFIH under the grants KH 130371, K 132696,  K 124171, SNN 129364, FK 132060, PD 137779, KKP 139502, and KKP-133819.

Research of B. Keszegh, D. Nagy, and D. P\'alv\"olgyi was also supported by the J\'anos Bolyai Research Fellowship of the Hungarian Academy of Sciences.

Research of B. Keszegh and D. P\'alv\"olgyi was furthermore supported by the New National Excellence Program of the Ministry for Innovation and Technology from the source of the National Research, Development and Innovation Fund, under grants \'UNKP-21-5 and \'UNKP-22-5, and by the National Research, Development and Innovation Office within the framework of the Thematic Excellence Program 2021 - National Research Sub programme TKP2021-NKTA-62: “Artificial intelligence, large networks, data security: mathematical foundation and applications".

Research of G. Wiener was also supported by project no.\ BME-NVA-02, implemented with the support provided by the Ministry of Innovation and Technology of Hungary from the National Research, Development and Innovation Fund, financed under the TKP2021 funding scheme.


\begin{thebibliography}{99}

\bibitem{aig} M. Aigner, Combinatorial Search, Wiley-Teubner Series in Computer
Science (1988)

\bibitem{aitr} M. Aigner, E. Triesch, Searching for subgraphs, in R. Bodendiek (ed.), Contemporary Methods in Graph Theory (1990) 31--45.

\bibitem{aa} N. Alon, V. Asodi, Learning a hidden subgraph. SIAM Journal on Discrete Mathematics, \textbf{18}(4), (2005), 697-712.

\bibitem{br}
A. Brouwer. Some lotto numbers from an extension of Tur\'an's theorem. \textit{Afdeling
Zuivere Wiskunde [Department of Pure Mathematics]}, \textbf{152} (1981).

\bibitem{dh} D.-Z. Du and F. K. Hwang. Combinatorial group testing and its applications, volume 12
of Series on Applied Mathematics. World Scientific Publishing Co., Inc., River Edge,
NJ, second edition, 2000.

\bibitem{erdos} P. Erd\H os, Some of my favourite unsolved problems, A Tribute to Paul Erd\H os. Cambridge University Press, New York (1990), 467–478.

\bibitem{ersi} P. Erd\H os and M. Simonovits. A limit theorem in graph theory. {\it Studia Sci.
Math. Hungar.} {\bf 1} (1966) 51--57.

\bibitem{erdsim} P. Erd\H os, M. Simonovits, Compactness results in extremal graph theory. Combinatorica,
{\bf 2}(3) 275--288, (1982)

\bibitem{ES} P. Erd\H os, M. Simonovits, Supersaturated graphs and hypergraphs. Combinatorica, {\bf 3}(2), 181-192 (1983).

\bibitem{es} P. Erd\H os and A. H. Stone. On the structure of linear graphs. {\it Bulletin of the American Mathematical Society.} {\bf 52}  (1946) 1087--1091.

\bibitem{qubit}
M. E. G\'asp\'ar, 
https://qubit.hu/2019/08/12/esz-ventura-mi-van-ha-memoriaznal-de-nincs-memoriad.

\bibitem{gyk}
E. Győri, B. Keszegh,
On the number of edge-disjoint triangles in $K_4$-free graphs, Combinatorica \textbf{37}(6) (2017), 1113--1124.

\bibitem{hend} G. R. T. Hendry, Maximum graphs with a unique k-factor. J. Combin Theory B
{\bf 37} (1984), 53--63.

\bibitem{hofsidvol} A. Hoffmann, E. Sidorowicz, L. Volkmann, Extremal bipartite graphs with a unique k-factor. Discussiones Mathematicae Graph Theory, {\bf 26}(2), 181-192. (2006)

\bibitem{hofvol} A. Hoffmann, L. Volkmann, On unique $k$-factors and unique $[1, k]$-factors in graphs. Discrete mathematics, {\bf 278}(1-3), 127-138. (2004)

\bibitem{joh} P. Johann, On the structure of graphs with a unique $k$-factor. Journal of Graph Theory, {\bf 35}(4), 227--243. (2000)


\bibitem{lov} L. Lov\'asz, On the structure of factorizable graphs, Acta Math Acad Sci
Hungar {\bf 23} (1972), 179--195.

\bibitem{kinyai}
P. Qiao, X. Zhan,   On a problem of Erdős about graphs whose size is the Turán number plus one. Bulletin of the Australian Mathematical Society, 105(2) (2022), 177--187.

\bibitem{rei}
C. Reiher, The clique density theorem. Annals of Mathematics {\bf 184}(3), (2016), 683--707.

\bibitem{shee} J. Sheehan, Graphs with exactly one hamiltonian circuit, Journal of Graph Theory, \textbf{1}(1), (1977) 37--43.

\bibitem{T}
P. Tur\'an. Egy gr\'afelm\'eleti sz\'els\H o\'ert\'ekfeladatr\'ol. \textit{Mat. Fiz. Lapok}, \textbf{48}, 436--452, 1941.

\bibitem{vol} L. Volkmann, The Maximum Size Of Graphs With A Unique $k$-Factor. Combinatorica, {\bf 24}(3), (2004) 531-540.
\end{thebibliography}
\end{document}